\numberwithin{equation}{section}
\theoremstyle{plain}
\newtheorem{theorem}[equation]{Theorem}
\newtheorem{proposition}[equation]{Proposition}
\newtheorem{prop}[equation]{Proposition}
\newtheorem{lemma}[equation]{Lemma}
\newtheorem{lem}[equation]{Lemma}
\newtheorem{cor}[equation]{Corollary}
\theoremstyle{definition}
\newtheorem{defin}[equation]{Definition}
\newtheorem{exm}[equation]{Example}
\theoremstyle{remark}
\newtheorem{remark}[equation]{Remark}
\newtheorem{rmk}[equation]{Remark}
\newtheorem{ques}[equation]{Question}
\newcommand{\Z}{\mathbb Z}
\newcommand{\C}{\mathbb C}
\renewcommand{\P}{\mathbb P}
\newcommand{\Q}{\mathbb Q}
\newcommand{\R}{\mathbb R}
\newcommand{\HH}{\mathbb H}
\newcommand{\matT}{\textup{\textsf{T}}}
\DeclareMathOperator{\disc}{\mathrm{disc}}
\DeclareMathOperator{\GL}{\mathrm{GL}}
\DeclareMathOperator{\M}{\mathrm{M}}
\DeclareMathOperator{\SL}{\mathrm{SL}}
\DeclareMathOperator{\sgn}{\mathrm{sgn}}
\newcommand{\calO}{\mathcal{O}}
\newcommand{\id}{\mathrm{id}}
\DeclareMathOperator{\Tr}{\mathrm{Tr}}
\DeclareMathOperator{\pf}{\mathrm{pf}}
\DeclareMathOperator{\adj}{\mathrm{adj}}
\DeclareMathOperator{\discpf}{\mathrm{discpf}}
\DeclareMathOperator{\dpf}{\discpf}
\newcommand{\linedef}[1]{\textsf{#1}}
\newcommand{\defi}[1]{\linedef{#1}}
\begin{document}

\title[Stickelberger's discriminant theorem]{Stickelberger's discriminant theorem for algebras}

\author{Asher Auel}
\address{Department of Mathematics, Dartmouth College, Kemeny Hall, Hanover, NH 03755, USA}
\email{asher.auel@dartmouth.edu}

\author{Owen Biesel}
\address{Department of Mathematics and Statistics, Carleton College, Center for Mathematics and Computing, Northfield, MN 55057, USA}
\email{owenbiesel@gmail.com}

\author{John Voight}
\address{Department of Mathematics, Dartmouth College, Kemeny Hall, Hanover, NH 03755, USA}
\email{jvoight@gmail.com}

\date{\today}

\begin{abstract}
Stickelberger proved that the discriminant of a number field is
congruent to $0$ or $1$ modulo $4$.  We generalize this to an
arbitrary (not necessarily commutative) ring of finite rank over $\Z$
using techniques from linear algebra.  Our proof relies on elementary matrix
identities.
\end{abstract}

\maketitle

\section{Introduction}

The \emph{discriminant} arises naturally in many situations in
mathematics, often as a measure of size or arithmetic complexity.  In
perhaps its simplest form, we learn that a quadratic equation
$ax^2+bx+c=0$ with $a,b,c \in \R$ has a real root if and only if its
discriminant $d \colonequals b^2-4ac$ is nonnegative.  In algebraic
number theory, the discriminant of a number field measures
ramification of primes \cite[Chapters 2--3]{Marcus:nf}; in the theory
of differential equations, the discriminant measures the extent to
which singular solutions exist.

In this note, we pursue discriminants in the context of rings and with
a view toward arithmetic.  

\subsection*{Motivation}

As motivation, we consider a very simple case: 
let $d \in \Z$ be a nonsquare and consider the quadratic ring
\begin{equation} 
\Z[\sqrt{d}] \colonequals \{a+b\sqrt{d} : a,b \in \Z\} \subseteq \C. 
\end{equation}
This ring has a natural notion of \defi{trace} given by 
\[
\Tr(a+b\sqrt{d})=(a+b\sqrt{d})+(a-b\sqrt{d})=2a \in \Z.
\]  
Of course $\Z[\sqrt{d}] = \Z + \Z \sqrt{d} \simeq \Z^2$ as abelian groups, and multiplication in $\Z[\sqrt{d}]$ can be written out as
\begin{equation} \label{eqn:multout}
(a+b\sqrt{d})(a'+b'\sqrt{d}) = (aa'+bb'd) + (ab'+a'b)\sqrt{d}
\end{equation}
for $a,b,a',b' \in \Z$.  The multiplication law \eqref{eqn:multout} in
$\Z[\sqrt{d}]$ can be given without an embedding into $\C$: on the
free abelian group $\Z^2$ with basis $1,e$, there is a unique ring
structure satisfying $e^2=d$.  Indeed, by the distributive law, it is
enough to remember the products of basis elements, with only the
product $e \cdot e$ needing to be specified.  Finally, we can recover the
discriminant from the \emph{traces} of these products, taking the
determinant:
\begin{equation} 
\det\begin{pmatrix} \Tr(1\cdot 1) & \Tr(1 \cdot e) \\ \Tr(e \cdot 1) & \Tr(e \cdot e) \end{pmatrix} =
\det\begin{pmatrix} 2 & 0 \\ 0 & 2d \end{pmatrix} = 4d. 
\end{equation}
This calculation agrees with the more familiar notion, since $\sqrt{d}$ is a root of the equation $x^2-d=0$ which has discriminant $4d$.  In a similar manner, we can define a ring structure for $e$ satisfying $e^2+be+c=0$, and we find the discriminant $b^2-4c$.

This approach works more generally.  Let $K$ be a number field (a finite extension of $\Q$), and let $\Z_K$ be its ring of integers, the subset of $K$ of elements that satisfy a monic polynomial with integer coefficients \cite[Chapter 1]{Marcus:nf}.  For example, we might take $K=\Q(\sqrt{-1})$, in which case $\Z_K=\Z[i] = \{a+bi : a,b \in \Z\}$.  Then one can define the discriminant of $\Z_K$ in a similar manner: if $\alpha_1,\dots,\alpha_n$ is an integral basis for $\Z_K$, and $\Tr \colon K \to \Q$ the trace, then we form the $n \times n$-matrix
\begin{equation}  \label{eqn:BTralp}
B \colonequals ( \Tr(\alpha_i \alpha_j) )_{i,j=1}^n \in \M_n(\Z)
\end{equation}
and define the discriminant
\[ \disc \Z_K \colonequals \det B. \]
(See Remark \ref{rmk:PNPN} for an equivalent definition in the context of Minkowski's \emph{geometry of numbers}.)
The matrix $B$ can be interpreted in linear algebraic terms: the bilinear form
\begin{equation}
\begin{aligned}
\Tr \colon K \times K &\to \Q \\
(\alpha,\beta) &\mapsto \Tr(\alpha\beta) 
\end{aligned}
\end{equation}
is symmetric (and nondegenerate), and the matrix $B$ is the \emph{Gram matrix} of this bilinear form in the basis $\alpha_1,\dots,\alpha_n$.

Visibly, for quadratic rings we have $b^2-4c \equiv b^2 \equiv 0,1 \pmod{4}$.  In fact, this congruence generalizes to all rings of integers, the starting point of our investigation.

\begin{theorem}[Stickelberger]
We have $\disc \Z_K \equiv 0,1 \pmod{4}$.
\end{theorem}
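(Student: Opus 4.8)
The plan is to realize the discriminant as an honest square up to the even/odd ambiguity of a determinant, and then run the factorization $x^2-y^2=(x+y)(x-y)$ in reverse. First I would fix the $n = [K:\Q]$ distinct field embeddings $\sigma_1,\dots,\sigma_n \colon K \hookrightarrow \C$ and form the matrix $M \colonequals (\sigma_i(\alpha_j))_{i,j} \in \M_n(\C)$. Since the trace is the sum of the conjugates, $\Tr(\alpha) = \sum_k \sigma_k(\alpha)$, the Gram matrix $B$ of \eqref{eqn:BTralp} factors as $B = M^{\matT} M$, whence $\disc \Z_K = \det B = (\det M)^2$. Writing $\delta \colonequals \det M$, the entire problem reduces to showing $\delta^2 \equiv 0,1 \pmod 4$.

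Next I would expand the determinant over the symmetric group $S_n$ and separate the even from the odd permutations. Setting
\[
P \colonequals \sum_{\substack{\pi \in S_n \\ \sgn(\pi)=+1}} \prod_{i=1}^n \sigma_i(\alpha_{\pi(i)}), \qquad N \colonequals \sum_{\substack{\pi \in S_n \\ \sgn(\pi)=-1}} \prod_{i=1}^n \sigma_i(\alpha_{\pi(i)}),
\]
we have $\delta = P - N$, and therefore
\[
\disc \Z_K = \delta^2 = (P-N)^2 = (P+N)^2 - 4PN.
\]
Thus it suffices to prove that $P+N$ and $PN$ are \emph{rational integers}: then $(P+N)^2$ is the square of an ordinary integer, which is $\equiv 0,1 \pmod 4$, while $4PN$ contributes nothing modulo $4$.

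The main obstacle is precisely this integrality-and-rationality claim, which I would settle with Galois theory. Both $P$ and $N$ are algebraic integers, being integer-coefficient polynomial expressions in the conjugates of the algebraic integers $\alpha_j$. Passing to the Galois closure $L$ of $K/\Q$ with group $G = \mathrm{Gal}(L/\Q)$, each $\tau \in G$ permutes the embeddings $\sigma_1,\dots,\sigma_n$; this permutation acts on $M$ by permuting its rows, so it sends $\delta$ to $\pm\delta$ and correspondingly either fixes the ordered pair $(P,N)$ or swaps $P$ and $N$. In either case $P+N$ and $PN$ are fixed by every $\tau \in G$, hence lie in $\Q$; being algebraic integers, they lie in $\Z$. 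This closes the argument.

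I would note that this classical route rests essentially on the splitting field and the permutation action of $G$ on conjugates — machinery that is unavailable for a general, possibly noncommutative, $\Z$-algebra. I therefore expect the paper's proof to replace the embedding matrix $M$ and its Galois symmetry by a purely linear-algebraic manipulation of the multiplication table and the Gram matrix $B$ itself, extracting a canonical ``square part'' directly from $B$ without ever factoring it as $M^{\matT}M$.
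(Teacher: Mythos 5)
Your proof is correct, and every step checks out: the factorization $B = M^{\matT}M$ gives $\disc \Z_K = (\det M)^2$, the even/odd split of the Leibniz expansion gives $(P+N)^2 - 4PN$, and the integrality-plus-Galois argument does place $P+N$ and $PN$ in $\Z$. But it is not the paper's proof: it is precisely the classical argument of Schur, which the paper cites as the standard exercise and records in Remark~\ref{rmk:PNPN} before deliberately replacing it. The paper instead obtains the statement as the special case $A = \Z_K$ of Theorem~\ref{main-theorem}, whose proof never factors $B$ and never leaves $\Z$: choose a unital basis (Proposition~\ref{prop:ranknunit}); observe that $\Tr(M^2) \equiv \Tr(M)^2 \pmod 2$ (Lemma~\ref{trace-square}) forces the Gram matrix to be \emph{tracelike}, i.e.\ $b_{11}=n$ and $b_{ii} \equiv b_{1i}^2 \pmod 2$ (Corollary~\ref{cor:gramabc}); pad by factors of $\Z$ until $4 \mid n$ (Lemma~\ref{split}); row-reduce to a symmetric matrix $C$ with even diagonal (Lemma~\ref{reduce-to-symmetric-even-diagonal}); and finally write $C = (U - U^{\matT}) + 2U^{\matT}$, where the adjugate expansion $\det(M+2Q) \equiv \det(M) + 2\Tr(\adj(M)Q) \pmod 4$ (Proposition~\ref{determinant-derivative}) together with Lemma~\ref{lem:uut} reduces everything to the pfaffian square $\det(U-U^{\matT}) = \pf(U-U^{\matT})^2$ (Proposition~\ref{prop:detC4}). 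The trade-off is exactly the one you anticipate in your closing sentence: your route is shorter but requires the $n$ distinct embeddings and a Galois closure, so it is confined to number fields (more generally, commutative \'etale algebras) and fails for rings such as $\M_n(\Z)$ or the Hurwitz quaternions; the paper's route uses only matrix identities over $\Z$, hence applies to every ring of rank $n$, commutative or not, and even to free algebras over an arbitrary commutative base ring (Theorem~\ref{PID-theorem}). As a bonus, the paper's square root is explicit and combinatorial---the discriminant pfaffian of Theorem~\ref{thm-dpf}---and it agrees modulo $2$ with your quantity $P+N$, the permanent of $M$; so your argument is the natural ``analytic'' counterpart whose integer shadow the paper computes directly from the multiplication table.
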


This theorem is called \emph{Stickelberger's discriminant theorem}, among other names.  While never stated explicitly in
Stickelberger's work \cite{stickelberger:discriminant}, this statement can be deduced from the main
results.  The modern simple proof given by Schur \cite{schur:stickelberger} is typically provided as an exercise in an algebraic number theory class (see e.g.\ Marcus \cite[Chapter 2, Exercise 22]{Marcus:nf} or Neukirch \cite[Section I.2, Exercise~7]{neukirch}).  For further discussion, see Remark \ref{rmk:PNPN}; and for more on this history, see Cox \cite{cox:stickelberger}.  (There is a different, much deeper, theorem of Stickelberger in algebraic number theory that describes the Galois module structure of class groups of cyclotomic fields.  For more on this theorem, see Washington \cite[Chapter 6]{Washington}.)  Various generalizations of this congruence have also been made \cite{martinet:discriminant,berlekamp:discriminant,baeza:discriminant,harrer,biesel_gioia:discriminant}.

\subsection*{Generalization}

With the motivation to study discriminants as measuring the bilinear form coming from the trace of multiplication, we are now ready to generalize. 
A \defi{ring of rank $n \in \Z_{\geq 1}$} is a ring (with $1$), not necessarily commutative,
whose underlying additive group is isomorphic to $\Z^n$.  Concretely, in a $\Z$-basis
$e_1, e_2, \dots, e_n$ for $A \simeq \Z^n$, multiplication is defined by
\begin{equation} \label{eqn:multtable}
e_i e_k = \sum_{j=1}^n c_{ijk} e_j 
\end{equation}
for $i,k=1,\dots,n$, with $c_{ijk} \in \Z$ (with multiplication extended to $A$ using the
distributive law).  The $n^3$ coefficients
 $(c_{ijk})_{i,j,k=1}^n$ form what is called a \defi{multiplication table} for $A$.  

Commutative rings of rank $n$, including rings of integers in number
fields, are of considerable interest.  For an overview, see Bhargava
\cite{bhargava:ICM}.  However, we do not restrict our work here to the
commutative case.  Already, the ring $\M_n(\Z)$ of $n \times
n$-matrices with entries in $\Z$ is a ring of rank $n^2$,
noncommutative for $n \geq 2$.  

Other noncommutative examples of rings of rank $n$ abound.  Even before J.J.\ Sylvester
coined the term ``matrix'' in 1848, Sir William Rowan Hamilton had discovered in 1843 the noncommutative algebra of quaternions
\[ \HH \colonequals \R + \R i + \R j + \R k \]
famously inscribing the equations
\[ i^2=j^2=k^2=ijk=-1 \]
into the Broom Bridge in Dublin.  Fifty years later,
Hurwitz \cite{Hurwitz:uberdie} considered the subring of (integral) \defi{Hurwitz quaternions}
\begin{equation} 
\mathcal{O} \colonequals \left\{ t+xi+yj+zk \in \HH : 
\begin{minipage}{36ex} 
\begin{center}
\textup{$t,x,y,z \in \tfrac{1}{2}\Z$ and } \\
\textup{$2t,2x,2y,2z \in \Z$ of the same parity}
\end{center}
\end{minipage} \right\}.
\end{equation}
A $\Z$-basis for $\calO$ is given by $1,i,j,\omega$ where $\omega
\colonequals (-1+i+j+k)/2$ satisfies the identity $\omega^2+\omega+1=0$.  The ring $\calO$ is a noncommutative ring of rank $4$; it may be thought of as a noncommutative analogue of the ring of integers of a quadratic field.  (For further reading, see Voight \cite{Voight:quat}.)

In fact, \emph{every} ring $A$ of rank $n$ is a subring of $\M_n(\Z)$. 
Explicitly, the coefficients of the multiplication table \eqref{eqn:multtable} provide a map
\begin{equation} \label{eqn:lambdamult}
\begin{aligned}
\lambda \colon A &\to \M_n(\Z) \\
e_i &\mapsto (c_{ijk})_{j,k=1,\dots,n} 
\end{aligned}
\end{equation}
(extended $\Z$-linearly) which defines an injective ring homomorphism.  Analogously to the above, we then define the \defi{discriminant} of $A$ by
\begin{equation} 
\disc(A) \colonequals \det(B) 
\end{equation}
where $B=(b_{ij})_{i,j} \in \M_n(\Z)$ is the matrix obtained by taking the trace of pairwise products of basis elements
\begin{equation}\label{def-bij}
b_{ij} \colonequals \Tr(\lambda(e_ie_j)).
\end{equation}
The discriminant $\disc(A)$ does not depend on the basis (see Lemma \ref{lem:welldefdisc}).

\begin{exm}
Computed using the basis of matrix units, we have $\disc(\M_n(\Z)) = (-1)^{n(n-1)/2} n^{n^2}$.
\end{exm}

\begin{exm}
For the Hurwitz quaternions $\mathcal{O}$ in the basis $1,i,j,\omega$, we have for example
\[ 
\lambda(i) = \begin{pmatrix} 
0 & -1 & 1 & 0 \\
1 & 0 & -1 & -1 \\
0 & 0 & -1 & -1 \\
0 & 0 & 2 & 1 
\end{pmatrix} \]
since $ij=k=1-i-j+2\omega$ and $i\omega = -i - j + \omega$.  Multiplying matrices and taking traces yields
\[ 
B = \begin{pmatrix}
4 & 0 & 0 & -2 \\
0 & -4 & 0 & -2 \\
0 & 0 & -4 & -2 \\
-2 & -2 & -2 & -2
\end{pmatrix} \]
and we find that $\disc(A) = \det(B) = -64$.
\end{exm}

\subsection*{Main result}

Our main result is a generalization of Stickelberger's theorem to an arbitrary rank $n$ ring.

\begin{theorem}
If $A$ is a ring of rank $n$, then $\disc(A) \equiv 0,1 \pmod{4}$.  
\end{theorem}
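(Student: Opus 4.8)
The plan is to mimic the structure of Schur's classical proof, where one writes $\disc \Z_K = (\det M)^2$ for $M = (\sigma_i(\alpha_j))$ the matrix of archimedean embeddings, expands $\det M = P - N$ into its even- and odd-permutation parts, and concludes $\disc \Z_K = (P+N)^2 - 4PN \equiv (P+N)^2 \pmod 4$ after checking $P+N, PN \in \Z$. For a general ring $A$ there are no embeddings and $A \tensor \C$ need not be semisimple, so this argument cannot be transported verbatim; instead I would look for an integer $X$, built directly from the multiplication table, with $\disc(A) \equiv X^2 \pmod 4$, and produce it through an explicit matrix identity. The starting point is the factorization of the Gram matrix: writing $T_i = \lambda(e_i)$ and $c_{ikl} = (T_i)_{kl}$, the identity $\Tr(T_iT_j) = \sum_{k,l} c_{ikl}c_{jlk}$ gives $B = P\Pi P^{T}$, where $P \in \M_{n\times n^2}(\Z)$ has entries $P_{i,(k,l)} = c_{ikl}$ and $\Pi$ is the permutation matrix of the transposition $(k,l) \mapsto (l,k)$ on the $n^2$ index pairs.

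Next I would apply the Cauchy--Binet formula to the product $B = P\,(\Pi P^{T})$, obtaining $\disc(A) = \det B = \sum_{S} \det(P_S)\,\det\bigl((\Pi P^{T})_S\bigr)$, the sum running over $n$-element subsets $S$ of the $n^2$ column indices. Because $\Pi$ merely swaps $(k,l)$ with $(l,k)$, the second factor equals $\pm \det(P_{S^{T}})$, where $S^{T} = \{(l,k) : (k,l)\in S\}$. Grouping each $S$ with its transpose $S^{T}$ then splits the sum: the \emph{symmetric} index sets $S = S^{T}$ contribute terms $\pm\det(P_S)^2$, while the genuinely transposed pairs $\{S, S^{T}\}$ contribute $2\cdot(\pm\det(P_S)\det(P_{S^{T}}))$. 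Reducing modulo $2$ kills the doubled contributions and the signs, so $\disc(A) \equiv \sum_{S=S^{T}}\det(P_S) \pmod 2$; this sum over symmetric configurations is the candidate for the mod-$2$ reduction of the sought square root $X$.

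To upgrade to a congruence modulo $4$ I would record the elementary fact that $\Tr(T^2) \equiv \Tr(T) \pmod 2$ for every $T \in \M_n(\Z)$ (the off-diagonal terms cancel in pairs), so that the diagonal $b_{ii} = \Tr(T_i^2)$ of $B$ reduces to the linear trace form $t_i = \Tr(T_i)$; since a linear form agrees with its square modulo $2$, the diagonal of $B$ already behaves like a perfect square, which is the source of the squares appearing above. The mod-$4$ statement then amounts to showing that, after accounting for the signs $(-1)^{p(S)}$ attached to the symmetric squares (here $p(S)$ counts the transposed pairs inside $S$) and the factor-$2$ cross terms, everything reorganizes into the form $X^2 - 4Y$ with $X, Y \in \Z$.

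The hard part will be exactly this mod-$4$ bookkeeping: the signs $(-1)^{p(S)}$ spoil the naive identity $(-1)^{p}\det(P_S)^2 \equiv \det(P_S)^2 \pmod 4$ when $\det(P_S)$ is odd, and the doubled cross terms must be shown to contribute multiples of $4$ (or to complete the square) rather than merely even amounts. Crucially, none of this can follow from formal properties of $P$ alone: the congruence is false for an arbitrary rank-$n$ sublattice of $\M_n(\Z)$---for instance the trace form on $\langle E_{12}, E_{21}\rangle$ has Gram determinant $-1 \equiv 3 \pmod 4$---so the argument must feed in that $A$ is genuinely a ring, namely that $\lambda(A)$ contains the identity and is closed under multiplication. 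I expect the associativity and unit relations among the $c_{ikl}$ to be precisely what forces the leftover terms into $4\Z$, and isolating the minimal such input is the crux of the proof.
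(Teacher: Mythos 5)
Your setup is correct as far as it goes: the factorization $B = P\Pi P^{\matT}$ with $P_{i,(k,l)} = c_{ikl}$ is valid, Cauchy--Binet does split $\det(B)$ into terms $\pm\det(P_S)^2$ from symmetric index sets plus doubled cross terms, and your example $\langle E_{12},E_{21}\rangle$ (trace-form Gram determinant $-1$) rightly shows that no purely formal manipulation of $P$ can succeed. But what you actually establish is only the mod-$2$ statement $\disc(A) \equiv \sum_{S=S^{\matT}} \det(P_S) \pmod 2$, which is trivial in the sense that every integer is a square modulo $2$; the theorem is a mod-$4$ statement, and your proposal stops exactly at its crux. You do not show that the signs $(-1)^{p(S)}$ on the symmetric terms are harmless modulo $4$, you do not show that the cross terms $2\sum \pm\det(P_S)\det(P_{S^{\matT}})$ complete a square rather than merely being even, and---decisively---you never exhibit \emph{how} the unit and associativity enter to force either of these. ``I expect the associativity and unit relations \dots\ to be precisely what forces the leftover terms into $4\Z$'' is a conjecture, not an argument, and since your own counterexample shows the conclusion is false without that input, the missing step is not bookkeeping but the entire mathematical content. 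This is a genuine gap.

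For comparison, the paper injects the ring structure in a much more economical way, and everything after that is elementary linear algebra. Choosing a \emph{unital} basis (Proposition \ref{prop:ranknunit}) makes $b_{1i} = \Tr(\lambda_\beta(e_i))$, so your observation $\Tr(M^2) \equiv \Tr(M)^2 \pmod 2$ ties the diagonal to the \emph{first row}: $b_{ii} \equiv b_{1i}^2 \pmod 2$ and $b_{11} = n$ (Corollary \ref{cor:gramabc}). This ``tracelike'' condition is the only ring input needed---note that you state the diagonal is square-like but never link it to another row, which is where the unit is used. From there the paper pads $A$ by factors of $\Z$ (Lemma \ref{split}) so that $4 \mid n$, row-reduces $B$ to a symmetric matrix $C$ with even diagonal while preserving $\det \pmod 4$ (Lemma \ref{reduce-to-symmetric-even-diagonal}), writes $C = (U - U^{\matT}) + 2U^{\matT}$, and applies the adjugate expansion $\det(M+2Q) \equiv \det(M) + 2\Tr(\adj(M)Q) \pmod 4$ (Proposition \ref{determinant-derivative}); the skew part contributes a genuine square $\pf(U-U^{\matT})^2$, and the correction term vanishes modulo $4$ by the identity $2\Tr(\adj(M-M^{\matT})M^{\matT}) = -n\det(M-M^{\matT})$ (Lemma \ref{lem:uut}, Proposition \ref{prop:detC4}). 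In short, the square root you are hunting for among symmetric index sets is, in the paper, simply a pfaffian (made explicit as the discriminant pfaffian in Section \ref{sec:discpf}); if you wish to pursue your Cauchy--Binet route, the tracelike condition coming from a unital basis is the ``minimal input'' you are looking for, and you would still need to supply the mod-$4$ argument that your proposal leaves open.
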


We prove this theorem using purely linear algebra techniques (as Theorem \ref{main-theorem}), 
giving a new proof of Stickelberger's theorem even in the case of the ring of integers of a number field.  Moreover, our proof
introduces a new invariant of a ring of rank $n$ equipped with a basis $\beta$ containing $1$.  We call it the
\linedef{discriminant pfaffian} $\dpf(A,\beta) \in \Z$ (see \S \ref{sec:discpf}), and it satisfies
\[ \disc(A) \equiv \dpf(A,\beta)^2 \pmod{4}. \]
For a quadratic ring $A \colonequals \Z[x]/(x^2-bx+c)$ we have $\dpf(A,(1,e))=b$.  And in general our discriminant pfaffian extracts a square root of the ``square part'' of the discriminant modulo $4$.  The name is motivated by the analogy with the classical pfaffian, the square root of the determinant of a skew-symmetric matrix.

\begin{exm}
Let $A$ be a ring of rank $3$, with basis $(1,e_2,e_3)$.  Let $B=(b_{ij})_{i,j}$.  Then $\discpf(A,\beta)=b_{12}b_{13}+b_{23}=\Tr(\lambda(e_2))\Tr(\lambda(e_3))+\Tr(\lambda(e_2e_3))$.
\end{exm}

\subsection*{Organization}

This paper is organized as follows.  In Section \ref{sec:notation} we set up background and notation.  In Section \ref{sec:over-Z} we prove our main result, and then in Section \ref{sec:discpf} we describe the discriminant pfaffian.  

\subsection*{Acknowledgments}
The authors would like to thank Darij Grinberg for posing the question
\cite{Darij}, for helpful correspondence, and for feedback.  The
authors are also grateful to the reviewers for their comments.  Auel
was supported by a Simons Foundation Collaboration Grant (712097),
a National Science Foundation Grant (2200845), and a Walter and Constance Burke Research Award.  Voight was supported by a Simons Collaboration Grant (550029).

\section{Notation}\label{sec:notation}

We begin by setting notation, building upon and detailing what was presented in the introduction.  Throughout this paper, by a \defi{ring} we mean a (not necessarily commutative) ring with multiplicative identity $1$.

\begin{defin}
Let $n \in \Z_{\geq 1}$.  A \defi{ring of rank $n$} is a ring that is isomorphic to $\Z^n$ as a $\Z$-module (equivalently, as an abelian group).
\end{defin}

\begin{defin} \label{defn:basisring}
Let $A$ be a ring of rank $n$.  A \defi{basis} for $A$ is an ordered $n$-tuple $\beta=(e_1,\dots,e_n)$ of elements of $A$ that generate $A$ as a $\Z$-module.  The \defi{multiplication table} for $A$ in a basis $\beta$ is the tuple $(c_{ijk})_{i,j,k}$ of $n^3$ coefficients $c_{ijk} \in \Z$ defined by
\begin{equation}
e_i e_k = \sum_{j=1}^n c_{ijk} e_j. 
\end{equation}

A \defi{framed ring} $(A,\beta)$ \defi{of rank $n$} is a ring $A$ of rank $n$ equipped with a basis $\beta$.  
\end{defin}


Let $(A,\beta)$ be a framed ring of rank $n$ with $\beta=(e_1,\dots,e_n)$.  

\begin{defin}
The \defi{matrix} of $a\in A$ is $\lambda_\beta(a)=(a_{ij})_{i,j} \in \M_n(\Z)$ where
 \[ ae_j = \sum_{i=1}^n a_{ij} e_i. \]
\end{defin}

The following lemma follows from a direct verification.

\begin{lemma}
The matrix map
\[ \lambda_\beta \colon A \hookrightarrow \M_n(\Z) \]
defines an injective ring homomorphism, and the map 
\begin{equation} \label{eqn:trace}
\begin{aligned}
t_\beta \colon A \times A &\to \Z \\
(a,b) &\mapsto \Tr(\lambda_\beta(ab))
\end{aligned}
\end{equation}
defines a symmetric, bilinear pairing on $A$.
\end{lemma}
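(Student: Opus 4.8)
The plan is to treat the two assertions in turn, establishing first that $\lambda_\beta$ is a ring homomorphism and then deducing the properties of $t_\beta$ from this. The map $\lambda_\beta$ sends $a \in A$ to the matrix of left multiplication by $a$ in the basis $\beta$, so $\Z$-linearity in $a$ is immediate from the defining relation $ae_j = \sum_i a_{ij}e_i$ together with the distributive law on $A$. To check multiplicativity I would expand $(ab)e_j = a(be_j)$ using the multiplication table: writing $be_j = \sum_k b_{kj}e_k$ and $ae_k = \sum_i a_{ik}e_i$ and substituting gives $(ab)e_j = \sum_i \bigl(\sum_k a_{ik}b_{kj}\bigr)e_i$, so the $(i,j)$-entry of $\lambda_\beta(ab)$ is exactly the $(i,j)$-entry of $\lambda_\beta(a)\lambda_\beta(b)$; here the associativity of $A$ is what makes the rearrangement legitimate. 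Finally $\lambda_\beta(1) = I_n$ since $1 \cdot e_j = e_j$.

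For injectivity, suppose $\lambda_\beta(a) = 0$, so that $ae_j = 0$ for every $j$. Since $\beta$ spans $A$ as a $\Z$-module, we may write $1 = \sum_j c_j e_j$ with $c_j \in \Z$, and then $a = a\cdot 1 = \sum_j c_j\,(ae_j) = 0$. Thus $\ker \lambda_\beta = 0$ and $\lambda_\beta$ is injective.

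The bilinearity of $t_\beta$ then follows formally: it is the composite of the $\Z$-bilinear multiplication $A \times A \to A$ with the $\Z$-linear map $\Tr \circ \lambda_\beta$, hence is itself $\Z$-bilinear. I expect the only step that genuinely requires care — and the one worth isolating — to be symmetry, precisely because $A$ need not be commutative, so that $ab \neq ba$ in general. The point is that we do not need commutativity of $A$: using that $\lambda_\beta$ is a homomorphism together with the cyclic invariance of the matrix trace, $t_\beta(a,b) = \Tr(\lambda_\beta(a)\lambda_\beta(b)) = \Tr(\lambda_\beta(b)\lambda_\beta(a)) = t_\beta(b,a)$, where the middle equality is the identity $\Tr(MN) = \Tr(NM)$. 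Thus the symmetry of the trace pairing is a consequence of the trace identity rather than of any commutativity hypothesis on $A$, which is exactly what makes the construction work in the noncommutative setting.
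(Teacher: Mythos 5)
Your proof is correct and is precisely the ``direct verification'' that the paper leaves to the reader (the paper states this lemma without proof). In particular, you correctly identify the one point that needs genuine care in the noncommutative setting --- the symmetry of $t_\beta$ follows from the identity $\Tr(MN)=\Tr(NM)$ applied to $M=\lambda_\beta(a)$, $N=\lambda_\beta(b)$, not from any commutativity of $A$ --- and your injectivity argument via $1=\sum_j c_j e_j$ is exactly right.
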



The matrix (or ``left multiplication'') map $\lambda_\beta$ is called the \defi{left regular representation} of $A$.  Indeed, on basis elements we have $\lambda_\beta(e_i)$ defined by the entries of the multiplication table as in \eqref{eqn:lambdamult}.  We call the map $t_\beta$ the \defi{trace pairing} on $A$.

\begin{defin}\label{discriminant}
The \defi{Gram matrix} of $(A,\beta)$ is the (symmetric) matrix $B=B(A,\beta)$ defined by
\[ (t_\beta(e_i,e_j))_{i,j=1,\dots,n} = \begin{pmatrix}
t_\beta(e_1,e_1) & t_\beta(e_1,e_2) & \dots & t_\beta(e_1,e_n) \\
t_\beta(e_2,e_1) & t_\beta(e_2,e_2) & \dots & t_\beta(e_2,e_n) \\
\vdots & \vdots & \ddots & \vdots \\
t_\beta(e_n,e_1) & t_\beta(e_n,e_2) & \dots &
t_\beta(e_n,e_n) \end{pmatrix}. \]
Thus the Gram matrix of $(A,\beta)$ is defined to be the classical
Gram matrix of the trace form $t_\beta$. The \defi{discriminant} of
$A$ (with respect to $\beta$) is
\[ \disc(A,\beta) \colonequals \det(B). \]
\end{defin}

\begin{lem} \label{lem:welldefdisc}
The trace pairing $t=t_\beta$ and the discriminant $\disc(A)=\disc(A,\beta)$ are well-defined, independent of the choice of basis $\beta$.
\end{lem}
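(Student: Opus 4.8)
The plan is to show that both the trace pairing $t_\beta$ and its Gram determinant $\disc(A,\beta)$ are unchanged when we replace the basis $\beta$ by another basis $\beta'$. The two bases of a rank-$n$ ring are related by a change-of-basis matrix $P \in \GL_n(\Z)$, and I would organize the proof around how each of the relevant objects transforms under $P$.

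First I would record the behavior of the left regular representation under change of basis. If $\beta' = \beta P$ (viewing the basis as a row of generators acted on by $P$ on the right), then for any $a \in A$ the matrices $\lambda_\beta(a)$ and $\lambda_{\beta'}(a)$ are conjugate: $\lambda_{\beta'}(a) = P^{-1}\lambda_\beta(a) P$. This is a direct computation from the defining relation $a e_j = \sum_i a_{ij} e_i$, expressing the new basis elements in terms of the old. Because conjugate matrices have equal trace, it follows immediately that $\Tr(\lambda_\beta(ab)) = \Tr(\lambda_{\beta'}(ab))$ for all $a,b \in A$; that is, the trace pairing $t(a,b) = \Tr(\lambda(ab))$ is an intrinsic, basis-free function of the pair $(a,b) \in A \times A$. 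This settles the well-definedness of $t$.

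Next I would turn to the Gram matrix. Writing $e_i' = \sum_k P_{ki} e_k$, bilinearity of $t$ gives the standard congruence transformation of Gram matrices: $B(A,\beta') = P^{\matT} B(A,\beta)\, P$. Taking determinants and using $\det(P^{\matT}) = \det(P)$ yields $\disc(A,\beta') = \det(P)^2 \disc(A,\beta)$. Since $P \in \GL_n(\Z)$, we have $\det(P) \in \{\pm 1\}$, so $\det(P)^2 = 1$ and hence $\disc(A,\beta') = \disc(A,\beta)$. This establishes the independence of the discriminant from the basis.

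The argument is essentially a concatenation of two standard linear-algebra facts (trace is a conjugacy invariant; Gram matrices transform by congruence), so there is no deep obstacle. The only point requiring genuine care is the bookkeeping in the first step: one must set conventions for whether basis vectors are acted on by $P$ on the left or right and whether $\lambda_\beta(a)$ records columns or rows, and then verify the conjugation relation $\lambda_{\beta'}(a) = P^{-1}\lambda_\beta(a)P$ with the correct placement of $P$ versus $P^{-1}$. Getting these index conventions consistent with the definition $a e_j = \sum_i a_{ij} e_i$ is the main place an error could creep in; once the conjugation relation is correctly pinned down, the rest follows formally.
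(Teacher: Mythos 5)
Your proof is correct and follows essentially the same route as the paper's: conjugation invariance of the trace gives basis-independence of the pairing, and the congruence transformation $B(A,\beta') = P^{\matT} B(A,\beta) P$ together with $\det(P) \in \{\pm 1\}$ gives basis-independence of the discriminant. Your index conventions ($\beta' = \beta P$, hence $\lambda_{\beta'}(a) = P^{-1}\lambda_\beta(a)P$) are self-consistent, so no gap remains.
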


\begin{proof}
Let $Q=[\id]_{\beta}^{\beta'} \in \GL_n(\Z)$ be a change of basis from $\beta$ to $\beta'$.  Then $\lambda_{\beta'}(a)=Q\lambda_{\beta}(a)Q^{-1}$ so $\Tr(\lambda_{\beta'}(a))=\Tr(\lambda_{\beta}(a))$ for all $a \in A$, hence $t$ is independent of the choice of basis.  Correspondingly, we have $B(A,\beta')=Q^{\matT} B(A,\beta) Q$, hence
\begin{equation} 
\det(B(A,\beta'))=\det(Q)^2 \det(B(A,\beta)) = \det(B(A,\beta)) 
\end{equation}
since $\det(Q) \in \{\pm 1\}$.  
\end{proof}

\begin{remark}
Strictly speaking, our definition of discriminant depends on the choice of representation $\lambda$.  One could also consider the right regular representation or indeed any faithful matrix representation of $A$.  Although these need not give the same answers, the proof below shows that they all satisfy a discriminant congruence.
\end{remark}

It will turn out to be crucial to our arguments in the next section to have $1$ as the first element of a basis.  

\begin{defin}
A \defi{unital basis} for $A$ is a basis $\beta=(e_1,\dots,e_n)$ with $e_1=1$, and a \defi{unitally framed ring of rank $n$} $(A,\beta)$ is a ring $A$ of rank $n$ equipped with a unital basis $\beta$.  
\end{defin}

\begin{prop} \label{prop:ranknunit}
Every ring of rank $n$ has a unital basis.
\end{prop}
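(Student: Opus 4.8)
The plan is to show that $1 \in A$ is a \emph{primitive} vector of the free $\Z$-module $A \cong \Z^n$ — that is, that $1$ is not a nontrivial integer multiple of any element of $A$ — and then to invoke the standard fact that a primitive element of a finitely generated free $\Z$-module can be completed to a basis. Recall that for a vector $v$ in a free module of finite rank over $\Z$ (or any PID), the following are equivalent: $v$ extends to a basis; the submodule $\Z v$ is a direct summand; the quotient $A/\Z v$ is torsion-free; and $v$ is primitive. So it suffices to verify the last condition for $v = 1$.

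First I would establish primitivity using the ring structure, which is exactly where the multiplicative identity plays an essential role. Suppose for contradiction that $1 = d w$ for some integer $d \geq 2$ and some $w \in A$. Then for every $a \in A$ we have $a = 1 \cdot a = (dw)a = d(wa)$, so $a \in dA$; since $a$ was arbitrary this forces $A = dA$. But under the isomorphism $A \cong \Z^n$ this reads $\Z^n = d\Z^n$, which is false for $d \geq 2$ (for instance $A/dA \cong (\Z/d\Z)^n \neq 0$). This contradiction shows $1$ is primitive.

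Finally I would complete $1$ to a basis. Since $\Z \cdot 1$ is then a direct summand, we may write $A = \Z \cdot 1 \oplus C$ for a complementary $\Z$-submodule $C$; as $C \cong A/\Z\cdot 1$ is finitely generated and torsion-free it is free of rank $n-1$, and choosing any $\Z$-basis $(e_2,\dots,e_n)$ of $C$ yields the unital basis $\beta = (1,e_2,\dots,e_n)$. Concretely, one can instead observe that primitivity of the coordinate vector of $1$ in any fixed basis means its entries have $\gcd$ equal to $1$, so that this vector occurs as the first column of some matrix in $\GL_n(\Z)$, whose columns then form the desired unital basis.

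I expect the only genuine content to be the primitivity step: everything afterward is the standard structure theory of modules over a PID. The point worth emphasizing is that primitivity of $1$ is not automatic for an arbitrary element of $\Z^n$ and uses the ring axioms essentially — specifically, that $1$ is a two-sided multiplicative identity, so that a hypothetical factorization $1 = dw$ propagates to divisibility of the whole ring $A$ by $d$.
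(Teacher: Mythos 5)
Your proof is correct, and while it shares the same skeleton as the paper's proof (show that $1$ is a primitive vector of $A \simeq \Z^n$, then complete it to a basis), both steps are carried out differently. For primitivity, the paper works in coordinates: writing $1 = \sum_k a_k e_k$ and expanding $e_1 = e_1 \cdot 1$ via the multiplication table, it extracts the explicit B\'ezout relation $1 = \sum_k a_k c_{11k}$, certifying $\gcd(a_1,\dots,a_n)=1$ directly from the structure constants. Your argument is basis-free and by contradiction: if $1 = dw$ with $d \geq 2$, then $a = 1 \cdot a = d(wa)$ for every $a$, forcing $A = dA$, which is impossible since $A/dA \simeq (\Z/d\Z)^n \neq 0$; this is arguably slicker and makes transparent exactly where the ring axioms enter, though it yields no explicit certificate. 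For the completion step, the paper deliberately includes a self-contained inductive proof (via the extended Euclidean algorithm) that a row vector with coprime entries can be moved to $(1,0,\dots,0)$ by an element of $\GL_n(\Z)$, whereas you invoke the standard structure theory over a PID (primitive vector $\Rightarrow$ $\Z v$ is a direct summand $\Rightarrow$ extends to a basis); the paper itself notes this claim ``can be found in many places,'' so citing it is legitimate, just less self-contained. In short: your route trades the paper's explicit, computational flavor for a more conceptual one, and both are complete and correct.
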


\begin{proof}
Let $A$ be a ring of rank $n$ and let $\beta= (e_1,\dots,e_n)$ be a (not necessarily unital) basis for $A$. Then $1 = a_1e_1 + \dots + a_ne_n$ with $a_1,\dots,a_n\in\Z$.  

We first claim that $\gcd(a_1,\dots,a_n)=1$.  Indeed, using the multiplication table, we have
\begin{equation} 
e_1 = e_1 \cdot 1 = \sum_{k=1}^n a_k e_1 e_k = \sum_{k=1}^n a_k \left( \sum_{j=1}^n c_{1jk} e_j \right) = \sum_{j=1}^n \left(\sum_{k=1}^n a_k c_{1jk}\right) e_j. 
\end{equation}
Since $\beta$ is a basis, by the coefficient of $e_1$ we have $1=\sum_{k=1}^n a_k c_{11k}$.  We conclude that $\gcd(a_1,\dots,a_n)=1$.  
 
Consider the row vector $a \colonequals (a_1,\dots,a_n)$.  We claim that there exists (invertible) $Q \in \GL_n(\Z)$ such that $aQ=(1,0,\dots,0)$.  Although the proof of this claim can be found in many places, we give an argument here in order to be self-contained.  We proceed by induction.  The base case $n=1$ is immediate.  In general, by the extended Euclidean algorithm (B\'ezout relation), there exist $x_{n-1},x_n \in \Z$ such that $a_{n-1}x_{n-1}+a_n x_n = g \colonequals \gcd(a_{n-1},a_n)$.  Let 
\[ P \colonequals \begin{pmatrix} x_{n-1} & -a_n/g \\ x_n & a_{n-1}/g \end{pmatrix}; \]
then $\det(P)= 1$ so $P \in \SL_2(\Z)$, and the block matrix $\begin{pmatrix} I & 0 \\ 0 & P \end{pmatrix} \in \SL_n(\Z)$ has
\begin{equation} \label{eqn:a1ang}
(a_1,\dots,a_{n-1},a_n) \begin{pmatrix} I & 0 \\ 0 & P \end{pmatrix} = (a_1,\dots,a_{n-2},g,0) 
\end{equation}
still with $\gcd(a_1,\dots,g)=\gcd(a_1,\dots,a_{n-1},a_n)=1$.
Therefore by induction, there exists $Q \in \GL_{n-1}(\Z)$ such that
$(a_1,\dots,g)Q=(1,0,\dots,0)$, so multiplying \eqref{eqn:a1ang} by
$\begin{pmatrix} Q & 0 \\ 0 & 1 \end{pmatrix}$ gives the result.

From the claim, we have $aQ=(1,0,\dots,0)$ and so the first row of the
inverse $Q^{-1} \in \GL_n(\Z)$ is indeed $(a_1,\dots,a_n)$.  Now
consider the change of basis of $A$ provided by $Q^{-1}$: write
$Q^{-1}=(q_{ij})_{i,j=1}^{n}$ and let $f_i \colonequals \sum_{j=1}^n
q_{ij} e_j$ for $i=1,\dotsc,n$.  Then $f_1=1$, and so the elements
$f_i$ form a unital basis for $A$, as desired.
\end{proof} 

The next lemma, which follows an observation by Darij Grinberg, is
proved by direct computation.

\begin{lemma}\label{split}
The product $\Z\times A$ is a ring of rank $n+1$ with basis 
\[ \beta' = ((1,e_1), (0,e_1), \dots, (0, e_n)); \] 
$\beta'$ is unital if $\beta$ is unital; and $\disc(\Z \times A, \beta')=\disc(A,\beta)$.
\end{lemma}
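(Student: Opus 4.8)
The plan is to verify the three assertions in turn, the last being the substantive one. First I would pin down the ring structure: $\Z \times A$ carries componentwise addition and multiplication with identity $(1, 1_A)$, and its underlying additive group is $\Z \times \Z^n \cong \Z^{n+1}$, so it has rank $n+1$. To see that $\beta'$ is a basis, I would write a general element as $(m, a)$ with $a = \sum_{i=1}^n a_i e_i$ and exhibit the expansion
\[ (m, a) = m \cdot (1, e_1) + (a_1 - m)\cdot(0, e_1) + \sum_{i=2}^n a_i \cdot (0, e_i), \]
which shows $\beta'$ spans; since it has $n+1$ elements this suffices (equivalently, linear independence follows by reading off the first coordinate and then invoking independence of $\beta$). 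For the unital claim, if $e_1 = 1_A$ then the first vector $(1, e_1) = (1, 1_A)$ is exactly the identity of $\Z \times A$, so $\beta'$ is unital.

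The key computation is the behavior of the trace. The quantity $\Tr(\lambda_{\beta'}(x))$ is independent of the chosen basis (exactly as in the proof of Lemma \ref{lem:welldefdisc}, via conjugation by the change-of-basis matrix), so I would compute it in the convenient auxiliary basis $((1,0), (0,e_1), \dots, (0,e_n))$, where left multiplication by $(m, a)$ is block diagonal with blocks $(m)$ and $\lambda_\beta(a)$. This gives $\Tr(\lambda_{\beta'}((m,a))) = m + \Tr(\lambda_\beta(a))$, and applying it to the product $(m,a)(m',a') = (mm', aa')$ yields
\[ t_{\beta'}\bigl((m,a),(m',a')\bigr) = mm' + t_\beta(a,a'). \]
In other words, the trace pairing on $\Z \times A$ is the orthogonal sum of the standard pairing on $\Z$ and the trace pairing $t_\beta$ on $A$.

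With $B = (b_{ij})$ the Gram matrix of $(A, \beta)$, the formula above determines the Gram matrix $B'$ of $(\Z \times A, \beta')$ in the ordering $(1, e_1), (0, e_1), \dots, (0, e_n)$: its lower-right $n \times n$ block (the pairings among the $(0, e_i)$) is exactly $B$. The crucial observation is that the first two basis vectors $(1, e_1)$ and $(0, e_1)$ share the same $A$-component, so their $t_{\beta'}$-pairings against any basis vector coincide except that the $\Z$-component contributes a single extra $1$ to the corner, giving $t_{\beta'}((1,e_1),(1,e_1)) = 1 + b_{11}$. Hence the first two rows of $B'$ agree outside the first column, where they differ by exactly $1$. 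I would then subtract the second row from the first; this preserves the determinant and replaces the first row by $(1, 0, \dots, 0)$. Expanding along that row, the surviving minor is the lower-right block $B$, so $\disc(\Z \times A, \beta') = \det(B') = \det(B) = \disc(A, \beta)$.

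The only point requiring care is the trace identity $\Tr(\lambda_{\beta'}((m,a))) = m + \Tr(\lambda_\beta(a))$; once the block-diagonal description in the auxiliary basis is in hand and basis-independence of the trace is invoked, everything reduces to a single determinant-preserving row operation, so I do not anticipate any genuine obstacle.
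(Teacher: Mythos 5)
Your proof is correct, and it supplies exactly the ``direct computation'' that the paper omits for this lemma: the paper simply asserts the result follows by direct verification, and your argument (block-diagonal left multiplication in the auxiliary basis $((1,0),(0,e_1),\dots,(0,e_n))$, giving $t_{\beta'}((m,a),(m',a')) = mm' + t_\beta(a,a')$, followed by a single determinant-preserving row operation on the Gram matrix) is a clean and complete way to carry it out. No gaps: the spanning identity, the basis-independence of the trace, and the final expansion along the row $(1,0,\dots,0)$ all check out.
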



\section{Stickelberger's discriminant theorem}\label{sec:over-Z}

In this section, we prove our main theorem, restated here for convenience.

\begin{theorem}\label{main-theorem}
 Let $A$ be a ring of rank $n$.  Then $\disc(A) \equiv 0,1 \pmod{4}$.  
\end{theorem}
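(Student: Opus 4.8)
The plan is to reduce to a convenient basis and then exhibit $\disc(A)$ as a perfect square modulo $4$, realized as (the square of) a pfaffian; since every integer square is $\equiv 0,1 \pmod 4$, this finishes the proof. First I would invoke the reductions already available. By Lemma~\ref{lem:welldefdisc} the discriminant is independent of the basis, so by Proposition~\ref{prop:ranknunit} I may work in a unital basis $\beta=(e_1,\dots,e_n)$ with $e_1=1$; having $1$ as the first basis vector lets me tie the diagonal of the Gram matrix to its first row, since then $b_{1j}=\Tr(\lambda_\beta(e_j))$. Moreover, by Lemma~\ref{split} I may replace $A$ by $\Z\times A$ without changing the discriminant, so, iterating at most once, I may assume $n$ is \emph{odd}. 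This parity normalization is exactly what makes the pfaffian of an $(n+1)\times(n+1)$ alternating matrix available below.

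Write $M_i\colonequals\lambda_\beta(e_i)\in\M_n(\Z)$, so $b_{ij}=\Tr(M_iM_j)$ and $B=(b_{ij})$. The engine of the proof is the elementary identity
\[ \Tr(M_iM_j)=\Tr(M_i)\Tr(M_j)-c(M_i,M_j),\qquad c(M,N)\colonequals\Tr(M)\Tr(N)-\Tr(MN), \]
where $c$ is the polarization $c(M,N)=s_2(M+N)-s_2(M)-s_2(N)$ of the second coefficient $s_2(M)\colonequals\tfrac12(\Tr(M)^2-\Tr(M^2))$ of the characteristic polynomial; thus $c$ is symmetric and integral, and $c(M,M)=2s_2(M)$ is \emph{even}. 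Setting $t\colonequals(\Tr M_1,\dots,\Tr M_n)^{\matT}$ and $C\colonequals(c(M_i,M_j))_{i,j}$, the identity reads $B=t\,t^{\matT}-C$ with $C$ a symmetric integer matrix with even diagonal. Any such $C$ can be written $C=U+U^{\matT}$ for an integer matrix $U$ (take the above-diagonal part of $C$ together with $\tfrac12 C_{ii}$ on the diagonal), and to it I attach the \emph{alternating} integer matrix $U-U^{\matT}$ and the bordered alternating matrix
\[ \Phi\colonequals\begin{pmatrix} 0 & t^{\matT} \\ -t & U-U^{\matT}\end{pmatrix}\in\M_{n+1}(\Z), \]
which makes sense precisely because $n+1$ is even. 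I would define $\dpf(A,\beta)\colonequals\pf(\Phi)$, up to a sign/normalization (affecting only its class modulo $2$) chosen to recover $\dpf=b$ for $\Z[x]/(x^2-bx+c)$ and $\dpf=b_{12}b_{13}+b_{23}$ in rank $3$.

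The heart of the matter is then the congruence
\[ \disc(A)=\det B\equiv\det\Phi\pmod 4, \]
after which we are done: $\Phi$ is alternating of even size, so $\det\Phi=\pf(\Phi)^2$ is a perfect square, forcing $\disc(A)\equiv\pf(\Phi)^2\equiv 0,1\pmod4$. To prove the congruence I would compare the two sides by multilinearity of the determinant modulo $4$. On the left, the matrix determinant lemma gives $\det(t\,t^{\matT}-C)=\det(-C)+t^{\matT}\adj(-C)\,t$; on the right, the bordered-determinant formula expresses $\det\Phi$ in terms of $t$ and the adjugate of the odd alternating matrix $U-U^{\matT}$. The two descriptions differ only through the replacement of $C=U+U^{\matT}$ by $U-U^{\matT}$, that is, by the even matrix $2U^{\matT}$, so the two determinants already agree modulo $2$; the real work is to show that, after incorporating the rank-one border $t\,t^{\matT}$ and using the integrality of $U$ (equivalently, the even diagonal of $C$), every remaining correction term is divisible by $4$. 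This last step is where I expect the main obstacle to lie: it requires expanding $\det B$ over permutations and $\det\Phi$ over perfect matchings and pairing the surviving odd terms, and it is exactly here that the unital normalization $e_1=1$ is used, via the relation $b_{ii}=\Tr(M_i^2)\equiv\Tr(M_i)^2=b_{1i}^2\pmod 2$ tying each diagonal entry to the first row of $B$. I would fix all signs and normalizations by checking the identity by hand in ranks $2$ and $3$, and then establish the general mod-$4$ matching by this term-by-term comparison.
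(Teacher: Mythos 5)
Your reductions and your reformulation are sound, but the proposal stops exactly where the theorem begins. What you do establish is correct: padding by Lemma~\ref{split} to make $n$ odd is legitimate; in a unital basis the decomposition $B = t\,t^{\matT} - C$ with $t_i = \Tr(\lambda_\beta(e_i)) = b_{1i}$ and $C$ symmetric with even diagonal is valid (a nice exact form of Lemma~\ref{trace-square}); the bordered matrix $\Phi$ is alternating of even size $n+1$, so $\det\Phi = \pf(\Phi)^2 \equiv 0,1 \pmod 4$. But the congruence $\det B \equiv \det \Phi \pmod 4$, which you yourself call ``the heart of the matter,'' is never proved: you defer it to a term-by-term comparison of permutation and matching expansions that you do not carry out. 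That congruence \emph{is} Theorem~\ref{main-theorem}; everything before it is bookkeeping. As written, the proposal reformulates the problem rather than solving it.

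Moreover, the deferred step is more delicate than your sketch suggests, in two concrete ways. First, it is not a formal consequence of the properties you plan to use. For $n=1$, the data $t = (1)$, $C = (2)$ satisfies everything your sketch invokes ($C$ symmetric with even diagonal, and $b_{11} = t_1^2 - C_{11} = -1 \equiv b_{11}^2 \pmod 2$), yet $\det B = -1 \equiv 3 \pmod 4$ while $\det\Phi = \det\begin{pmatrix} 0 & 1 \\ -1 & 0 \end{pmatrix} = 1$. So no argument using only those relations can succeed; what rescues the claim is the other half of tracelike-ness, $b_{11} = n$, which forces the first row of $C$ to be $C_{1j} = (n-1)t_j$, hence \emph{even} because $n$ is odd --- a fact your proposal never mentions but which any correct proof must exploit. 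Second, writing $\det B = -\det(C) + t^{\matT}\adj(C)\,t$ and $\det\Phi = t^{\matT}\adj(U - U^{\matT})\,t$ (valid for $n$ odd), the two discrepancies are individually only divisible by $2$, not $4$: already for $n = 3$ one computes $\det(C) \equiv 2x^2 \pmod 4$ with $x = b_{12}b_{13} - b_{23}$, so your expectation that ``every remaining correction term is divisible by $4$'' is false --- the correction terms must be shown to \emph{cancel} modulo $4$. For what it is worth, the claimed congruence is true for tracelike matrices (one can check all $3 \times 3$ cases directly, and in general it can be proved by writing the bordered form of $B$ as $\Phi$ plus a matrix unit plus twice an integer matrix, then applying Proposition~\ref{determinant-derivative} together with a mod-$2$ analysis of adjugates of alternating matrices). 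So your route --- bordering the skew matrix by the trace vector with $n$ odd, instead of the paper's padding to $4 \mid n$, row reduction (Lemma~\ref{reduce-to-symmetric-even-diagonal}), and even-diagonal splitting (Proposition~\ref{prop:detC4}) --- is viable and would give a genuinely different proof; but closing the gap requires essentially the same tools and effort as the paper's argument, and the proposal does not supply them.
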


The outline of the proof is as follows.  First, we study the properties of the Gram matrix of $A$, noting it has a certain property relating the first row and column to the diagonal; we call such Gram matrices \emph{tracelike}, and we prove the congruence more generally for tracelike matrices.  Second, we transform the symmetric matrix to one with even diagonal; from there, we \emph{expand by the adjugate} to establish the congruence.

\subsection*{Tracelike Gram matrices}

As a first step, consider the following well-known lemma.  We give a quick proof, to provide motivation and for completeness.  

\begin{lemma}\label{trace-square}
 We have $\Tr(M^2) \equiv \Tr(M)^2 \pmod{2}$ for all $M \in \M_n(\Z)$. 
\end{lemma}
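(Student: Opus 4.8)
The plan is to prove the congruence $\Tr(M^2) \equiv \Tr(M)^2 \pmod 2$ by writing both sides explicitly in terms of the matrix entries and comparing the off-diagonal contributions modulo $2$. Write $M = (m_{ij})_{i,j}$. First I would expand $\Tr(M^2) = \sum_{i,j} m_{ij} m_{ji}$, separating the diagonal terms $i = j$ from the off-diagonal terms $i \neq j$:
\[
\Tr(M^2) = \sum_{i=1}^n m_{ii}^2 + \sum_{i \neq j} m_{ij} m_{ji}.
\]
On the other side, $\Tr(M)^2 = \left(\sum_i m_{ii}\right)^2 = \sum_i m_{ii}^2 + \sum_{i \neq j} m_{ii} m_{jj}$.

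The key observation is that the cross terms on each side pair up symmetrically, so each contributes an even quantity. For the off-diagonal part of $\Tr(M^2)$, the terms $m_{ij}m_{ji}$ and $m_{ji}m_{ij}$ (for $i \neq j$) are equal and occur together, so $\sum_{i \neq j} m_{ij}m_{ji} = 2\sum_{i < j} m_{ij}m_{ji} \equiv 0 \pmod 2$. Similarly, the cross terms of $\Tr(M)^2$ satisfy $\sum_{i \neq j} m_{ii}m_{jj} = 2\sum_{i<j} m_{ii}m_{jj} \equiv 0 \pmod 2$. Hence both $\Tr(M^2)$ and $\Tr(M)^2$ are congruent modulo $2$ to $\sum_i m_{ii}^2$, which gives the claim.

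Alternatively, and perhaps more cleanly, I would reduce everything modulo $2$ from the start and use the Frobenius-type identity: over $\F_2$ one has $\left(\sum_i x_i\right)^2 = \sum_i x_i^2$ since the characteristic is $2$. Applying this with $x_i = m_{ii}$ shows $\Tr(M)^2 \equiv \sum_i m_{ii}^2 \pmod 2$, and the symmetry argument above handles $\Tr(M^2)$. Either route is elementary.

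There is no real obstacle here; the only point requiring a moment's care is the bookkeeping of the off-diagonal sums, making sure to pair $(i,j)$ with $(j,i)$ correctly so that the factor of $2$ is visible. The entire proof is a direct computation, as the paper itself signals by describing it as ``well-known'' and promising only ``a quick proof.''
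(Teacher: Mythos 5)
Your proof is correct and is essentially identical to the paper's: both expand $\Tr(M^2)=\sum_{i,j}m_{ij}m_{ji}$ and $\Tr(M)^2=\bigl(\sum_i m_{ii}\bigr)^2$, observe that the off-diagonal terms pair up to give even contributions, and conclude both sides are congruent to $\sum_i m_{ii}^2 \pmod 2$. The Frobenius remark over $\F_2$ is a pleasant repackaging but not a different argument.
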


\begin{proof}
 Let $M = (m_{ij})_{i,j=1}^n$. Then
 \[\Tr(M)^2 = \left(\sum_{i=1}^n m_{ii}\right)^2 = \sum_{i=1}^n m_{ii}^2 + 2\sum_{1\leq i < j \leq n} m_{ii}m_{jj},\]
 whereas
 \begin{align*}
 \Tr(M^2) &= \Tr\left(\sum_{j=1}^n m_{ij}m_{jk}\right)_{i,k=1}^n = \sum_{i=1}^n\sum_{j=1}^n m_{ij}m_{ji} \\
 &= \sum_{i=1}^n m_{ii}^2 + 2\sum_{1\leq i < j \leq n} m_{ij}m_{ji}.
 \end{align*}
So modulo $2$, both sums are congruent to $\sum_{i=1}^n m_{ii}^2$.
\end{proof}

In particular, Lemma \ref{trace-square} applies to the entries of the Gram matrices considered in the previous section (Definition \ref{discriminant}).

\begin{cor} \label{cor:gramabc}
Let $A$ be a ring of rank $n$, let $\beta=(e_1,\dots,e_n)$ be a unital basis for $A$, and let $B(A,\beta)=(b_{ij})_{i,j}$ be the Gram matrix of $(A,\beta)$.  Then $b_{11}=n$ and $b_{ii} \equiv b_{1i}^2 \pmod{2}$ for $i=2,\dots,n$.  
\end{cor}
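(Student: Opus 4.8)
The plan is to unwind the definitions of the Gram matrix entries and reduce both assertions to facts that are already available. The essential input is that $\beta$ is \emph{unital}, so $e_1 = 1$; this means left multiplication by $e_1$ is the identity map, i.e.\ $\lambda_\beta(e_1) = I_n$, and it also means $e_1 e_i = e_i$ for every $i$. I expect to use the unital hypothesis in two distinct places below.

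First I would establish $b_{11} = n$. Since $e_1 e_1 = 1 \cdot 1 = 1$, we have $\lambda_\beta(e_1 e_1) = \lambda_\beta(1) = I_n$, and therefore $b_{11} = \Tr(\lambda_\beta(e_1 e_1)) = \Tr(I_n) = n$.

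Next, for the congruence $b_{ii} \equiv b_{1i}^2 \pmod 2$, the key observation is that $\lambda_\beta$ is a ring homomorphism, so $\lambda_\beta(e_i e_i) = \lambda_\beta(e_i)^2$; hence $b_{ii} = \Tr(\lambda_\beta(e_i)^2)$. On the other hand, because $e_1 = 1$ gives $e_1 e_i = e_i$, we have $b_{1i} = \Tr(\lambda_\beta(e_1 e_i)) = \Tr(\lambda_\beta(e_i))$. Setting $M \colonequals \lambda_\beta(e_i) \in \M_n(\Z)$ and applying Lemma \ref{trace-square} yields $\Tr(M^2) \equiv \Tr(M)^2 \pmod 2$, which is exactly $b_{ii} \equiv b_{1i}^2 \pmod 2$.

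There is essentially no obstacle here: the statement is a direct corollary, and the only substantive work was already done in Lemma \ref{trace-square}. The one point I would flag explicitly is that the unital hypothesis on $\beta$ is genuinely needed — it is what lets us identify $b_{1i}$ with $\Tr(\lambda_\beta(e_i))$ (so that the quantity appearing in Lemma \ref{trace-square} is actually a Gram matrix entry) and what pins down $b_{11} = n$.
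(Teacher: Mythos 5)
Your proof is correct and follows essentially the same route as the paper's: identify $b_{1i} = \Tr(\lambda_\beta(e_i))$ using $e_1 = 1$, obtain $b_{11} = \Tr(I) = n$, and apply Lemma \ref{trace-square} to $M = \lambda_\beta(e_i)$ for the congruence. Your explicit flagging of where the unital hypothesis enters is a nice touch, but the argument is the same.
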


\begin{proof}
Since $\beta$ is a unital basis we have $e_1=1$ so for all $i=1,\dots,n$ we have 
\begin{equation} \label{b1i}
b_{1i}=t(e_1,e_i)=\Tr(\lambda_\beta(e_i)). 
\end{equation}
Taking $i=1$ in \eqref{b1i} we get $b_{11}=\Tr(I)=n$, where $I \in \M_n(\Z)$ is the identity matrix.  For $i=2,\dots,n$, applying Lemma \ref{trace-square} and \eqref{b1i} gives
\[ b_{ii} = \Tr(\lambda_\beta(e_i)^2) \equiv \Tr(\lambda_\beta(e_i))^2 = b_{1i}^2 \pmod{2}. \qedhere \]
\end{proof}

Corollary \ref{cor:gramabc} isolates the key property that implies our desired congruence.  Accordingly, we make the following definition.  

\begin{defin}
A symmetric matrix $B=(b_{ij})_{i,j} \in \M_n(\Z)$ is \defi{tracelike} if $b_{11}=n$ and $b_{ii} \equiv b_{1i}^2 \pmod{2}$ for all $i=2,\dots,n$.
\end{defin}

The Gram matrix $B(A,\beta)$ of any framed ring $(A,\beta)$ of rank $n$
is a tracelike matrix by Corollary~\ref{cor:gramabc}.

\begin{ques}
Is every tracelike matrix the Gram matrix of an algebra in a unital basis?
\end{ques}

\subsection*{Symmetrizing}

We now proceed to study determinants of tracelike matrices.  Our proof consists first of a row reduction step to obtain a symmetric matrix with even diagonal; then we prove such matrices satisfy the desired congruence.  From now on, let $B=(b_{ij})_{i,j} \in \M_n(\Z)$ be a tracelike matrix.  

\begin{lemma}\label{reduce-to-symmetric-even-diagonal}
Let $B=(b_{ij})_{i,j} \in \M_n(\Z)$ be a tracelike matrix.  Suppose that $4 \mid n$, and for $i=2,\dots,n$ let $c_i \in \Z$ be such that $b_{ii}=b_{1i}^2+2c_i$.  
Let 
\[ C \colonequals 
\begin{pmatrix}
 n & b_{12} & b_{13} & \dots & b_{1n} \\
 b_{12} & 2c_2 & b_{23} - b_{12}b_{13} & \dots & b_{2n} - b_{12}b_{1n}\\
 b_{13} & b_{23} - b_{12}b_{13} & 2c_3 & \dots & b_{3n} - b_{13}b_{1n}\\
 \vdots & \vdots & \vdots & \ddots & \vdots \\
 b_{1n} & b_{2n} - b_{12}b_{1n} & b_{3n} - b_{13}b_{1n} & \dots & 2c_n
\end{pmatrix} \in \M_n(\Z). \]
Then $C$ is a symmetric matrix with diagonal entries in $2\Z$ and $\det(B) \equiv \det(C) \pmod{4}$.
\end{lemma}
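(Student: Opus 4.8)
The plan is to obtain $C$ from $B$ by a sequence of symmetric elementary operations that preserve the determinant exactly (for the diagonal fix-up) and that change it only by a multiple of $4$ (for the off-diagonal adjustments). The target matrix $C$ differs from $B$ in two ways: the diagonal entries $b_{ii}$ for $i \geq 2$ are replaced by $2c_i = b_{ii} - b_{1i}^2$, and the off-diagonal entries $b_{ij}$ (for $2 \leq i,j$, $i \neq j$) are replaced by $b_{ij} - b_{1i}b_{1j}$. Both modifications are exactly what one gets by subtracting $b_{1i}$ times the (symmetrized) first row/column, so the natural move is to perform the congruence transformation $C = E^{\matT} B E$, where $E$ is the upper-triangular integer matrix with $1$'s on the diagonal, first row entries $E_{1j} = -b_{1j}$ for $j \geq 2$, and zeros elsewhere below the first row.

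First I would write down $E$ explicitly and verify by block computation that $E^{\matT} B E$ has the off-diagonal shape claimed: the $(i,j)$ entry for $i,j \geq 2$ becomes $b_{ij} - b_{1i}b_{1j}$ (the cross terms involving $b_{11} = n$ contribute $n\, b_{1i}b_{1j}$, which must be accounted for). This is the step where I expect the arithmetic to need care, because $b_{11} = n$ is \emph{not} necessarily zero, so the naive ``subtract the first row'' clearing does not cleanly produce $b_{ij} - b_{1i}b_{1j}$; the extra $n\, b_{1i}b_{1j}$ terms are where the hypothesis $4 \mid n$ will enter. Concretely, the diagonal entry at position $(i,i)$ under $E^{\matT} B E$ comes out to $b_{ii} - 2b_{1i}^2 + n\, b_{1i}^2 = 2c_i + (n-4)\, b_{1i}^2$, using $b_{ii} = b_{1i}^2 + 2c_i$; and the off-diagonal $(i,j)$ entry comes out to $b_{ij} - 2b_{1i}b_{1j} + n\, b_{1i}b_{1j} = (b_{ij} - b_{1i}b_{1j}) + (n-3)\,b_{1i}b_{1j}$. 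These do not equal the entries of $C$ on the nose, but they differ from them by multiples of $(n-4)$ and $(n-3)$ respectively.

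The main obstacle, then, is reconciling these discrepancy terms with the claim $\det(B) \equiv \det(C) \pmod 4$. Since $4 \mid n$, we have $n - 4 \equiv 0 \pmod 4$, so the diagonal discrepancies $(n-4)b_{1i}^2$ vanish modulo $4$; but the off-diagonal discrepancy $(n-3)b_{1i}b_{1j} \equiv b_{1i}b_{1j} \pmod 4$ does not vanish, so a direct entrywise congruence between $E^{\matT}BE$ and $C$ fails. The cleanest fix is to choose $E$ so that the first-column entries (not just first-row) are $-b_{1j}$ as well, performing a genuine \emph{symmetric} congruence $E^{\matT} B E$ where $E = I - b_{1\bullet}\, u_1^{\matT}$ with $u_1$ the first standard basis vector and $b_{1\bullet}$ the vector $(0, b_{12}, \dots, b_{1n})^{\matT}$; then $\det(E) = 1$ so $\det(E^{\matT} B E) = \det(B)$ \emph{exactly}, and I would compute the resulting matrix entries and show they agree with $C$ modulo $4$ using $4 \mid n$ to kill the surviving $n$-dependent terms. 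I would verify that $\det(E^\matT B E) = \det(C)$ modulo $4$ by noting that each of the $n-1$ modified rows differs from the corresponding row of $C$ by $(0, \ast, \dots, \ast)$ with entries divisible by $4$, and then expanding the determinant multilinearly and discarding the $\equiv 0 \pmod 4$ contributions. Finally, symmetry of $C$ and the divisibility $2 \mid 2c_i$ of its diagonal entries are immediate from the construction, completing the lemma.
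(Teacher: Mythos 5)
There is a genuine gap: the lemma's matrix $C$ is simply \emph{not} congruent modulo $4$ to a symmetric congruence transform $E^{\matT}BE$ of the kind you construct, so both your main route and your proposed fix fail. First, your arithmetic is off: writing $b \colonequals (0,b_{12},\dots,b_{1n})^{\matT}$, with $u_1$ the first standard basis vector and $E=I-u_1b^{\matT}$, the $(i,j)$ entry of $E^{\matT}BE$ for $i,j\geq 2$ is $b_{ij}-2b_{1i}b_{1j}+n\,b_{1i}b_{1j}$, so the discrepancies from $C$ are $(n-1)b_{1i}b_{1j}$ off the diagonal and $(n-1)b_{1i}^2$ on the diagonal, not $(n-3)$ and $(n-4)$. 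Since $n-1\equiv 3\pmod 4$, \emph{neither} discrepancy vanishes: the diagonal entries fail too, contrary to your claim. The failure is structural, not a bookkeeping issue: any two-sided unipotent clearing subtracts the rank-one correction twice, producing $b_{ij}-2b_{1i}b_{1j}$ modulo $4$ where $C$ has $b_{ij}-b_{1i}b_{1j}$. Your repair makes this worse, not better: with $E=I-bu_1^{\matT}$ one has $E^{\matT}BE=(I-u_1b^{\matT})\,B\,(I-bu_1^{\matT})$, where the left factor alters only the first row and the right factor alters only the first column, so the entire lower-right $(n-1)\times(n-1)$ block of $E^{\matT}BE$ equals that of $B$, untouched. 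In particular its diagonal entries are still $b_{ii}=b_{1i}^2+2c_i$, typically odd, so they do not agree with $2c_i$ even modulo $2$. The claimed entrywise agreement with $C$ modulo $4$ is false, and the concluding determinant-expansion step has nothing to apply to.

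The missing idea --- and the paper's proof --- is to drop the symmetry requirement on the intermediate matrix and use \emph{one-sided} row operations. Left multiplication by $L\colonequals I-bu_1^{\matT}$ (your repair matrix, but applied on one side only) subtracts $b_{1i}$ times row $1$ from row $i$; this preserves the determinant exactly, and $LB$ agrees with $C$ in every entry except the first column, whose entries are $(1-n)b_{1i}$ instead of $b_{1i}$. Since $4\mid n$ gives $1-n\equiv 1\pmod 4$, we get $LB\equiv C\pmod 4$ entrywise, and since the determinant is a polynomial with integer coefficients in the entries (no multilinear expansion is needed), $\det(B)=\det(LB)\equiv\det(C)\pmod 4$. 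Note that $LB$ is not symmetric; symmetry is recovered only modulo $4$, and that is exactly where the hypothesis $4\mid n$ enters. If you insist on retaining your congruence-transformation framework, it can be completed only by routing through this one-sided step: from $LB\equiv C\pmod 4$ one gets $LBL^{\matT}\equiv CL^{\matT}\pmod 4$ entrywise, hence $\det(B)=\det(LBL^{\matT})\equiv\det(CL^{\matT})=\det(C)\pmod 4$ --- but this is the paper's argument carrying a superfluous factor of $L^{\matT}$.
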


\begin{proof}
We begin with 
\[ B = \begin{pmatrix}
 n & b_{12} & b_{13} & \dots & b_{1n} \\
 b_{12} & b_{12}^2 + 2c_2 & b_{23} & \dots & b_{2n}\\
 b_{13} & b_{23} & b_{13}^2 + 2c_3 & \dots & b_{3n}\\
 \vdots & \vdots & \vdots & \ddots & \vdots \\
 b_{1n} & b_{2n} & b_{3n} & \dots & b_{1n}^2 + 2c_n
\end{pmatrix}.
\]
Subtracting $b_{12}$ times the first row from the second, and $b_{13}$ times the first row from the third, and so on, we preserve the determinant:
\[ \det(B) = \det\begin{pmatrix}
 n & b_{12} & b_{13} & \dots & b_{1n} \\
 (1-n)b_{12} & 2c_2 & b_{23} - b_{12}b_{13} & \dots & b_{2n} - b_{12}b_{1n}\\
 (1-n)b_{13} & b_{23} - b_{12}b_{13} & 2c_3 & \dots & b_{3n} - b_{13}b_{1n}\\
 \vdots & \vdots & \vdots & \ddots & \vdots \\
 (1-n)b_{1n} & b_{2n} - b_{12}b_{1n} & b_{3n} - b_{13}b_{1n} & \dots & 2c_n
\end{pmatrix}.
\]
The result now follows since $4 \mid n$ so $1-n \equiv 1 \pmod{4}$.
\end{proof}

\subsection*{Expanding by adjugate}

Recall that the \defi{adjugate} of $A \in \M_n(\Z)$ is the transpose of the matrix of the cofactors of $A$, defined by
\begin{equation} 
\adj(A) \colonequals \left((-1)^{i+j} \det(A'_{ji})\right)_{i,j=1}^n,
\end{equation}
where $A'_{ij} \in \M_{n-1}(\Z)$ is the submatrix of $A$ obtained by removing the $i$th row and $j$th column.  We have
\begin{equation} \label{eqn:adjA}
A\adj(A)=\adj(A)A=\det(A)I
\end{equation}
as well as $\adj(A^{\matT})=\adj(A)^{\matT}$ and $\adj(cA)=c^{n-1}\adj(A)$ for $c \in \Z$.

\begin{proposition}\label{determinant-derivative}
Let $M,Q \in \M_n(\Z)$.  Then 
 \[ \det(M + 2Q) \equiv \det(M) + 2\Tr(\adj(M)Q)\pmod{4}. \]
\end{proposition}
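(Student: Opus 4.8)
The plan is to regard $\det(M + tQ)$ as a polynomial in a formal variable $t$ and to read off its lowest-order behavior. Because $M$ and $Q$ have integer entries, multilinearity of the determinant in the columns of $M + tQ$ expresses $\det(M + tQ) = \sum_{k=0}^{n} a_k t^k$ with integer coefficients $a_k \in \Z$, each $a_k$ being a sum of determinants of matrices whose columns are drawn from those of $M$ and $Q$. Specializing $t = 2$ then gives
\[ \det(M + 2Q) = a_0 + 2a_1 + \sum_{k=2}^{n} a_k\, 2^k, \]
and since $4 \mid 2^k$ for every $k \geq 2$, the tail sum vanishes modulo $4$. Thus it suffices to identify the two lowest coefficients as $a_0 = \det(M)$ and $a_1 = \Tr(\adj(M)Q)$.

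The constant term is immediate: setting $t = 0$ gives $a_0 = \det(M)$. For the linear term I would expand by multilinearity in columns. Writing $m_j$ and $q_j$ for the $j$th columns of $M$ and $Q$, the coefficient of $t^1$ is the sum over $j = 1,\dots,n$ of the determinant of the matrix obtained from $M$ by replacing its $j$th column $m_j$ with $q_j$. Expanding each of these determinants along the replaced $j$th column via cofactors yields $\sum_{i=1}^{n} q_{ij}(-1)^{i+j}\det(M'_{ij}) = \sum_{i=1}^{n} q_{ij}\,\adj(M)_{ji}$, using the definition of the adjugate from the excerpt (note the transpose: $\adj(M)_{ji} = (-1)^{i+j}\det(M'_{ij})$). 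Summing over $j$ collects these into $\sum_{j=1}^{n} (\adj(M)Q)_{jj} = \Tr(\adj(M)Q)$, which is exactly $a_1$.

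Combining the two identifications with the specialization $t = 2$ gives $\det(M + 2Q) \equiv \det(M) + 2\Tr(\adj(M)Q) \pmod{4}$, as claimed. The only substantive step is the identification $a_1 = \Tr(\adj(M)Q)$; this is an integer-coefficient, discrete form of Jacobi's formula for the derivative of the determinant, and the main care it requires is bookkeeping the cofactor expansion so that the transpose in the definition of $\adj$ lands correctly, i.e.\ so that the $(j,i)$ entry of $\adj(M)$ pairs with the $(i,j)$ entry of $Q$ in forming the trace. Everything else is a clean consequence of multilinearity and the fact that higher powers of $2$ are divisible by $4$.
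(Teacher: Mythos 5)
Your proof is correct, and it takes a route that differs from the paper's in the key step. You share the overall skeleton with the paper's \emph{second} proof: both view $\det(M+tQ)$ as a polynomial in a formal variable with integer coefficients and specialize $t=2$, noting that all terms of degree $\geq 2$ die modulo $4$. But the identification of the linear coefficient is handled quite differently. The paper does this either (first proof) by expanding the Leibniz sum $\sum_{\sigma}(\sgn\sigma)\prod_i(m_{i\sigma(i)}+2q_{i\sigma(i)})$ modulo $4$ and reorganizing the correction term modulo $2$ into $\Tr(Q\adj(M))$, or (second proof) by writing $\det(M+xQ)=\det(M)\det(I+xM^{-1}Q)$ for invertible $M \in \GL_n(\R)$ and then extending to all real matrices by density and continuity. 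You instead compute the coefficient of $t$ exactly, by multilinearity of the determinant in columns and cofactor expansion along the replaced column: the coefficient is $\sum_{j}\det\bigl(M \text{ with } j\text{th column replaced by } q_j\bigr)=\sum_{j}\sum_{i}q_{ij}\adj(M)_{ji}=\Tr(\adj(M)Q)$, i.e.\ the integer form of Jacobi's formula, with the transpose in the definition of $\adj$ handled correctly. Your route buys something real: it yields the exact polynomial identity (not merely a congruence), it is purely algebraic and so works verbatim over any commutative ring (relevant to the paper's Theorem \ref{PID-theorem}), and it avoids both the sign bookkeeping modulo $2$ of the paper's first proof and the topological detour (density of $\GL_n(\R)$, continuity) of the second.
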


\begin{proof}
Write $M = (m_{ij})_{i,j=1}^n$ and $Q=(q_{ij})_{i,j=1}^n$.  We begin with the expansion  
\begin{equation} \label{eqn:m2qexp1}
\det(M+2Q) = \sum_{\sigma \in S_n} (\sgn \sigma) \prod_{i=1}^n (m_{i\sigma(i)}+2q_{i\sigma(i)})
\end{equation}
where $S_n$ is the symmetric group of degree $n$.  Expanding out the right-hand side modulo $4$, for each $\sigma \in S_n$ we have
\begin{equation} \label{eqn:m2qexp2} 
\prod_{i=1}^n (m_{i\sigma(i)}+2q_{i\sigma(i)}) \equiv \prod_{i=1}^n m_{i\sigma(i)} + 2 \sum_{j=1}^n q_{j\sigma(j)} \prod_{\substack{i=1 \\ i \neq j}}^n m_{i\sigma(i)} \pmod{4}.
\end{equation}
Combining \eqref{eqn:m2qexp1}--\eqref{eqn:m2qexp2} and interchanging summations gives
\begin{equation} \label{eqn:m2qe3}
\det(M+2Q) \equiv \sum_{\sigma \in S_n} (\sgn \sigma) \prod_{i=1}^n m_{i\sigma(i)} + 2\sum_{j=1}^n \sum_{\sigma \in S_n} q_{j\sigma(j)} \prod_{\substack{i=1 \\ i \neq j}}^n m_{i\sigma(i)} \pmod{4},
\end{equation}
ignoring signs as we work with an even integer modulo $4$.  The first term is of course $\det(M)$.  For the second sum, for all $j,k$ we have 
\begin{equation} 
\det(M'_{jk}) = \pm \sum_{\substack{\sigma \in S_n \\ \sigma(j)=k}} (\sgn \sigma) \prod_{\substack{i=1 \\ i \neq j}}^n m_{i\sigma(i)}.
\end{equation}
Reorganizing the sum, working modulo $2$ so we may ignore signs, we obtain
\begin{equation} \label{eqn:m2qe4}
\begin{aligned}
\sum_{j=1}^n \sum_{\sigma \in S_n} q_{j\sigma(j)} \prod_{\substack{i=1 \\ i \neq j}}^n m_{i\sigma(i)} &\equiv
\sum_{j=1}^n \sum_{k=1}^n \sum_{\substack{\sigma \in S_n \\ \sigma(j)=k}} q_{jk} \prod_{\substack{i=1 \\ i \neq j}}^n m_{i\sigma(i)} 
\equiv \sum_{j=1}^n \sum_{k=1}^n q_{jk} \det(M'_{jk}) \\  & \equiv \sum_{j=1}^n \sum_{k=1}^n q_{jk} \adj(M)_{kj} \equiv \Tr(Q\adj(M)) \pmod{2}. 
\end{aligned}
\end{equation}
Plugging \eqref{eqn:m2qe4} into \eqref{eqn:m2qe3} then gives the result.  
\end{proof}

\begin{proof}[{Second proof of Proposition \textup{\ref{determinant-derivative}}}]
%
We extend our scope to real matrices and show that the identity holds when $M$ is invertible, and then for all matrices.  Let $M,Q \in \M_n(\R)$.  

First, using an indeterminate $x$ we have
\[ \det(M+xQ) = c_0(M,Q) + c_1(M,Q)x + \dots + c_n(M,Q)x^n \in \R[x]. \] 
For example, plugging in $x=0$ gives $c_0(M,Q)=\det(M)$ for all $M,Q$.  Moreover, $\det(I-xQ)$ is the reverse characteristic polynomial of $Q$, so $c_1(I,Q)=-\Tr(Q)$.  We define the map
\begin{equation}
\begin{aligned}
\M_n(\R) \times \M_n(\R) &\to \R \\
(M,Q) &\mapsto c_1(M,Q).
\end{aligned}
\end{equation}

Next, if $M \in \GL_n(\R)$ is invertible, we have
\begin{equation} \label{eqn:MTQ}
\begin{aligned}
\det(M+xQ) &= \det(M)\det(I+xM^{-1}Q) \\
&= \det(M)(1-\Tr(M^{-1}Q)x + h_{M,Q}(x))  \\
&= \det(M)-\Tr(\adj(M)Q)x + \det(M) x^2 h_{M,Q}(x)
\end{aligned}
\end{equation}
for some $h_{M,Q}(x) \in \R[x]$, using \eqref{eqn:adjA} which gives $\adj(M)=\det(M)M^{-1}$.  Thus $c_1(M,Q)=-\Tr(\adj(M)Q)$ for the set of matrices $M \in \GL_n(\R)$ which are dense with respect to the usual topology on $\M_n(\R) \simeq \R^{n^2}$.

Finally, the function $c_1(M,Q)$ is a polynomial in the entries of
$M,Q$ (as $M,Q$ range over $\M_n(\R)$) so continuous in these entries;
the same is true for $-\Tr(\adj(M)Q)$.  We just showed these functions
are equal whenever $\det(M) \neq 0$, so in fact they must equal for
all $M,Q$.  Restricting back to $M,Q \in \M_n(\Z)$, we have
\begin{equation}  \label{eqn:MTQoverZZ}
\begin{aligned}
\det(M+xQ) &= \det(M) - \Tr(\adj(M)Q) x  \\
&\qquad\qquad + c_2(M,Q)x^2 + \dots + c_n(M,Q)x^n \in \Z[x], 
\end{aligned}
\end{equation}
the resulting polynomial visibly having integer coefficients.  Plugging in $x=2$ into \eqref{eqn:MTQoverZZ} then gives the result.
\end{proof}

\begin{remark}
Many linear algebra statements can be proven in the same manner as the second proof, using the method of \emph{universal polynomials}, where the entries of the matrices are left as indeterminates.  If instead of a congruence, once wishes to prove an equality, then it is enough to do so over the field $\Q(x_{ij})_{i,j}$, where now the determinant is a nonzero polynomial, so invertible.  For example, the Cayley--Hamilton theorem may be proven this way.
\end{remark}

\subsection*{Determinants of even symmetric matrices}

We are now ready for the second key step in the proof.  We will use the fact that the determinant of every skew-symmetric matrix $A$ has a canonical square root called its \defi{pfaffian} $\pf(A)$; see, for example, Stembridge \cite[Proposition 2.2]{stembridge:pfaffians}.

\begin{prop} \label{prop:detC4}
Let $C \in \M_n(\Z)$ be a symmetric matrix with diagonal entries in $2\Z$.  Suppose $4 \mid n$, and let $U$ be the matrix obtained from the upper-triangular part of $C$ and half its diagonal.  Then $C=U+U^{\matT}$ and
\[ \det(C) \equiv \det(U-U^{\matT})=\pf(U-U^{\matT})^2 \equiv 0,1 \pmod{4}. \]
\end{prop}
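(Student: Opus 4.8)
The plan is to push everything onto the skew-symmetric matrix $S \colonequals U - U^{\matT}$ and then to invoke Proposition \ref{determinant-derivative}. First I would dispatch the easy assertions. The identity $C = U + U^{\matT}$ is immediate from the construction of $U$: off the diagonal exactly one of $U, U^{\matT}$ carries the relevant entry of $C$ (using symmetry of $C$ below the diagonal), while on the diagonal each contributes half of the (even) diagonal entry of $C$. The same bookkeeping shows $S = U - U^{\matT}$ is skew-symmetric. Since $4 \mid n$ forces $n$ even, the pfaffian identity $\det(S) = \pf(S)^2$ applies (Stembridge \cite[Proposition 2.2]{stembridge:pfaffians}); as $\pf(S) \in \Z$ and every integer square is congruent to $0$ or $1$ modulo $4$, this already yields $\det(U - U^{\matT}) = \pf(U - U^{\matT})^2 \equiv 0,1 \pmod 4$, settling the final equality and congruence in the statement.

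The substantive claim is therefore $\det(C) \equiv \det(S) \pmod 4$. To prove it I would write $C = S + 2U^{\matT}$ and apply Proposition \ref{determinant-derivative} with $M = S$ and $Q = U^{\matT}$, giving
\[ \det(C) = \det(S + 2U^{\matT}) \equiv \det(S) + 2\Tr(\adj(S)U^{\matT}) \pmod 4, \]
so it suffices to show that $2\Tr(\adj(S)U^{\matT}) \equiv 0 \pmod 4$.

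The hard part will be evaluating this trace, and it is exactly here that the parity of $n$ re-enters. Because $S$ is skew-symmetric and $n$ is even, $\adj(S)$ is itself skew-symmetric: using $\adj(S^{\matT}) = \adj(S)^{\matT}$ and $\adj(cS) = c^{n-1}\adj(S)$ we get $\adj(S)^{\matT} = \adj(-S) = (-1)^{n-1}\adj(S) = -\adj(S)$. Transposing inside the trace and using its cyclic invariance then gives $\Tr(\adj(S)U^{\matT}) = \Tr(U\adj(S)^{\matT}) = -\Tr(\adj(S)U)$, whence
\[ 2\Tr(\adj(S)U) = \Tr\big(\adj(S)(U - U^{\matT})\big) = \Tr(\adj(S)S) = n\det(S) \]
by \eqref{eqn:adjA}. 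Consequently $2\Tr(\adj(S)U^{\matT}) = -n\det(S) \equiv 0 \pmod 4$ since $4 \mid n$, and feeding this back into the displayed congruence gives $\det(C) \equiv \det(S) \pmod 4$, closing the chain. The crux of the argument is thus the skew-symmetry of $\adj(S)$, which is what collapses the trace term into a multiple of $\Tr(\adj(S)S) = n\det(S)$; the divisibility $4 \mid n$ then does the rest, and everything else is formal manipulation.
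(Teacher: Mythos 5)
Your proposal is correct and follows essentially the same route as the paper: decompose $C=(U-U^{\matT})+2U^{\matT}$, apply Proposition \ref{determinant-derivative}, and show the correction term equals $-n\det(U-U^{\matT})\equiv 0 \pmod 4$. Your inline observation that $\adj(S)$ is skew-symmetric when $n$ is even is just a repackaging of the paper's Lemma \ref{lem:uut}, whose proof performs the identical transpose-and-adjugate manipulation.
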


For this proposition, we need a lemma.

\begin{lem} \label{lem:uut}
Let $M \in \M_n(\Z)$ with $2 \mid n$.  Then 
\[ 2\Tr(\adj(M-M^{\matT})M^{\matT}) = -n\det(M-M^{\matT}). \]
\end{lem}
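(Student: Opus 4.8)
The plan is to decompose $M^{\matT}$ relative to the skew-symmetric matrix $A \colonequals M - M^{\matT}$, and then exploit the fact that for even $n$ the adjugate of a skew-symmetric matrix is again skew-symmetric, which kills the surviving cross term.

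First I would set $A \colonequals M - M^{\matT}$ and $S \colonequals M + M^{\matT}$, so that $A^{\matT} = -A$, $S^{\matT} = S$, and $2M^{\matT} = S - A$. Substituting into the left-hand side gives
\[ 2\Tr(\adj(A)M^{\matT}) = \Tr(\adj(A)S) - \Tr(\adj(A)A). \]
The second term is immediate from the fundamental identity $\adj(A)A = \det(A)I$ recorded in \eqref{eqn:adjA}: it equals $\Tr(\det(A)I) = n\det(A)$. Since the target is $-n\det(A) = -n\det(M - M^{\matT})$, it remains only to show that $\Tr(\adj(A)S) = 0$.

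For this I would first establish that $\adj(A)$ is skew-symmetric. Using the two identities recorded after \eqref{eqn:adjA}, namely $\adj(cA) = c^{n-1}\adj(A)$ and $\adj(A^{\matT}) = \adj(A)^{\matT}$, together with $A^{\matT} = -A$ and $n$ even, I compute
\[ \adj(A)^{\matT} = \adj(A^{\matT}) = \adj(-A) = (-1)^{n-1}\adj(A) = -\adj(A), \]
so $\adj(A)$ is indeed skew-symmetric. Then for any skew-symmetric $X$ and symmetric $S$ one has $\Tr(XS) = \Tr((XS)^{\matT}) = \Tr(S^{\matT}X^{\matT}) = -\Tr(SX) = -\Tr(XS)$, whence $2\Tr(XS) = 0$ and therefore $\Tr(XS) = 0$ (the doubled trace vanishing forces the trace itself to vanish, as it is an integer). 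Applying this with $X = \adj(A)$ gives $\Tr(\adj(A)S) = 0$, and combining with the computation above yields the claimed identity.

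The only place where the hypothesis $2 \mid n$ is used is in the skew-symmetry of $\adj(A)$ via the sign $(-1)^{n-1} = -1$, and this is the genuine crux of the argument; everything else is formal manipulation with the adjugate identities and the cyclicity of the trace. I expect no real obstacle here, only mild sign bookkeeping in $(-1)^{n-1}$ and in the symmetric/skew-symmetric trace vanishing.
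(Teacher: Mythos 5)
Your proof is correct and is essentially the paper's own argument in lightly repackaged form: the paper sets $r \colonequals \Tr(\adj(M-M^{\matT})M^{\matT})$, transposes to get $r = -\Tr(\adj(M-M^{\matT})M)$ (using the same identities $\adj(A^{\matT})=\adj(A)^{\matT}$, $\adj(-A)=(-1)^{n-1}\adj(A)$, and $2 \mid n$), and then adds the two expressions so that $2r = \Tr(\adj(A)(M^{\matT}-M)) = -n\det(A)$ with $A = M - M^{\matT}$. Your decomposition $2M^{\matT} = S - A$ together with the explicit observation that $\adj(A)$ is skew-symmetric is the same computation rearranged, resting on exactly the same ingredients.
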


\begin{proof}
Let $r \colonequals \Tr(\adj(M-M^{\matT})M^{\matT})$.  
Taking the transpose and recalling the properties of the adjugate,
\begin{equation}
 \begin{aligned}
 r &= \Tr(M(\adj(M-M^{\matT}))^{\matT}) = \Tr(M\adj(M^{\matT} - M))\\
  &= \Tr(M(-1)^{n-1}\adj(M-M^{\matT}))= -\Tr(\adj(M-M^{\matT})M).
 \end{aligned}
 \end{equation}
Adding back $r$, by linearity of trace we have
\begin{align*}
 2r &= \Tr(\adj(M-M^{\matT})M^{\matT}) - \Tr(\adj(M-M^{\matT})M)\\
 &= \Tr(\adj(M-M^{\matT})(M^{\matT}-M)) = \Tr(-\det(M-M^{\matT})I) \\
 &= -n\det(M-M^{\matT})
\end{align*}
proving the claim.
\end{proof}

\begin{proof}[Proof of Proposition~\textup{\ref{prop:detC4}}]
We have $C=U+U^{\matT}=(U-U^{\matT}) + 2U^{\matT}$.  By Proposition~\ref{determinant-derivative}, we have
\begin{equation} 
\det(C) \equiv \det(U-U^{\matT}) + 2\Tr(\adj(U-U^{\matT})U^{\matT}) \pmod{4}. 
\end{equation}
Since $U-U^{\matT}$ is a skew-symmetric matrix, we have 
\[
\det(U-U^{\matT})=\pf(U-U^{\matT})^2 \equiv 0,1\pmod{4}.
\]  
By Lemma \ref{lem:uut} we have
\begin{equation} 
2\Tr(\adj(U-U^{\matT})U^{\matT}) \equiv 0 \pmod{4}
\end{equation}
and the result follows.
\end{proof}

\begin{remark}
The hypothesis $4 \mid n$ in Proposition \ref{prop:detC4} is
necessary. Indeed, for $n=1$ we could take $\det\begin{pmatrix}
2 \end{pmatrix}=2$, for $n=2$ we could take $\det\begin{pmatrix} 0 & 1
\\ 1 & 0 \end{pmatrix}=-1$, and for $n=3$ we could take the block
matrix obtained from these two.
\end{remark}

\subsection*{Proof conclusion}

With these ingredients in hand, we now prove our main theorem.

\begin{proof}[Proof of Theorem~\textup{\ref{main-theorem}}]
Let $A$ be a ring of rank $n$.  Replacing $A$ by $A \times \Z^r$ if necessary, by Lemma~\ref{split} we may suppose without loss of generality that $4 \mid n$.  By Proposition \ref{prop:ranknunit}, $A$ has a unital basis $\beta$, so by Corollary \ref{cor:gramabc}, the Gram matrix $B=B(A,\beta)$ is tracelike.  Then by Lemma~\ref{reduce-to-symmetric-even-diagonal}, there exists a symmetric matrix $C$ with even diagonal such that $\det(B) \equiv \det(C)\pmod 4$.  Putting these together and applying Proposition \ref{prop:detC4}:
\[ \disc(A)=\det(B) \equiv \det(C) \equiv 0,1 \pmod 4 \]
as desired.
\end{proof}

\subsection*{Generalizations}

The proof of our main result used just techniques from linear algebra.  Accordingly, it immediately generalizes to a wider context, allowing an arbitrary commutative base ring.  

Let $R$ be a commutative ring (with $1$).  An \defi{$R$-algebra} is a
ring $A$ (with $1$), not necessarily commutative, equipped with a ring
homomorphism $R \to A$ whose image lies in the center of $A$.  For the
$R$-algebras considered in this section, we will suppose that the map
$R \hookrightarrow A$ is injective, so that we may identify $R$ with
its image $R1 \subseteq A$.  An $R$-algebra $A$ is \defi{free of rank
$n$} if $A \simeq R^n$ as $R$-modules, i.e., $A$ has an $R$-basis
$\beta=(e_1,\dots,e_n)$.

The rest of the definitions and results in Section \ref{sec:notation}
generalize, with only two adjustments.  First, in contrast to
Lemma~\ref{lem:welldefdisc}, we only obtain a well-defined
discriminant $\disc A \in R/R^{\times 2}$, the set of elements of $R$
up to squares of units in $R$, as $\det(\GL_n(R))=R^\times$.  Second,
in contrast to Proposition~\ref{prop:ranknunit}, we do not know
whether unital bases exist for an arbitrary free $R$-algebra.
However, we may always reduce to working with a unitally framed
algebra by invoking the following, which is a direct generalization of
Lemma~\ref{split}.

\begin{lem}
If $(A,\beta)$ is a framed $R$-algebra of rank $n$ with $\beta=(e_1,\dots,e_n)$, then $A' \colonequals R \times A$ has a unital framing
\begin{equation} \label{eqn:yupyup}
\beta'=((1,1),(0,e_1),\dots,(0,e_n)).  
\end{equation}
Furthermore, we have $\disc(R \times A,\beta')=\disc(A,\beta)$ in
$R/R^{\times 2}$.
\end{lem}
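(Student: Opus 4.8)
The plan is to reduce the computation of $\disc(R \times A,\beta')$ to a block-diagonal Gram matrix by comparing $\beta'$ with the evident product basis. First I would check that $\beta'$ is a unital $R$-basis. Since $(1,1)$ is the multiplicative identity of $A' \colonequals R \times A$, the first entry of $\beta'$ is $1_{A'}$, so any basis beginning with it is unital. To see $\beta'$ is a basis, write $1_A = \sum_{j=1}^n u_j e_j$ with $u_j \in R$ and compare with the evident $R$-basis $f_0 \colonequals (1,0)$, $f_j \colonequals (0,e_j)$ of $R \times A$: we have $(1,1) = f_0 + \sum_j u_j f_j$ while $(0,e_j) = f_j$, so the change-of-basis matrix between $\beta'$ and $(f_0,\dots,f_n)$ is lower-triangular unipotent, hence lies in $\GL_{n+1}(R)$ with determinant $1$. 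In particular $\beta'$ really is a basis and not merely a spanning set.

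Next I would compute the trace pairing on $A'$ in the product basis $(f_0,\dots,f_n)$. The key observation is that left multiplication on $R \times A$ respects the product decomposition, so in this basis left multiplication by $(r,a)$ is represented by the block-diagonal matrix $\begin{pmatrix} r & 0 \\ 0 & \lambda_\beta(a)\end{pmatrix}$, whose trace is $r + \Tr(\lambda_\beta(a))$. Since the trace is basis-independent, it follows that the trace form of $A'$ is the orthogonal direct sum of the form $(r,s)\mapsto rs$ on $R$ and the trace form $t_\beta$ on $A$; concretely, $t_{\beta'}((r,a),(s,b)) = rs + t_\beta(a,b)$. Hence the Gram matrix of $A'$ in the product basis $(f_0,\dots,f_n)$ is the block matrix $\begin{pmatrix} 1 & 0 \\ 0 & B(A,\beta)\end{pmatrix}$, whose determinant is $\det B(A,\beta) = \disc(A,\beta)$.

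Finally I would invoke the $R$-algebra analogue of Lemma~\ref{lem:welldefdisc}: a change of basis by $Q \in \GL_{n+1}(R)$ multiplies the discriminant by $\det(Q)^2$. Because the change of basis between the product basis and $\beta'$ has determinant $1$, we conclude $\disc(A',\beta') = \disc(A',(f_0,\dots,f_n)) = \disc(A,\beta)$, in fact exactly in $R$ and so a fortiori in $R/R^{\times 2}$.

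I do not anticipate a serious obstacle. The only points requiring care over a general commutative base ring are the verification that $\beta'$ is a basis, which is handled by the unipotent change of basis, and the bookkeeping showing that left multiplication is block-diagonal in the product basis. Everything else is the same orthogonal-sum argument underlying Lemma~\ref{split}, now free of the ad hoc first basis vector because $(1,1)$ is already the identity of $A'$.
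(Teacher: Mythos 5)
Your proof is correct and is essentially the argument the paper has in mind: the paper states this lemma as a ``direct generalization'' of Lemma~\ref{split}, which it declares proved by direct computation, and your writeup (block-diagonal Gram matrix in the product basis, then the unipotent change of basis to $\beta'$) is precisely that computation, cleanly organized. Your observation that the equality $\disc(A',\beta')=\disc(A,\beta)$ holds exactly in $R$, not merely in $R/R^{\times 2}$, is a correct (if mild) strengthening, since the change-of-basis matrix has determinant $1$.
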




%
%
%

With these in mind, the same proof gives the following theorem.

\begin{theorem}\label{PID-theorem}
Let $R$ be a commutative ring and let $A$ be a free $R$-algebra of rank $n$.  Then 
\[ \disc(A,\beta) \equiv \discpf(A',\beta')^2 \pmod{4} \]
where $A'=R \times A$ and $\beta'$ is as in \textup{\eqref{eqn:yupyup}}.
\end{theorem}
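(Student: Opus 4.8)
The plan is to follow the scaffold already built over $\Z$ and promote each ingredient to a statement over the arbitrary commutative base ring $R$, reading the final congruence \[ \disc(A,\beta) \equiv \discpf(A',\beta')^2 \pmod 4 \] as an equation in $R/R^{\times 2}$ (or more precisely, tracking a genuine representative in $R$ and comparing modulo $4R$). First I would invoke the generalized splitting lemma to pass from $(A,\beta)$ to the unitally framed algebra $(A',\beta')$ with $A'=R\times A$, which raises the rank by one while preserving the discriminant; iterating the $\Z$-version's trick of crossing with copies of $R$, I may arrange that the rank is divisible by $4$, exactly as in the proof of Theorem~\ref{main-theorem}. The point of the reduction to a \emph{unital} basis is that Corollary~\ref{cor:gramabc} then applies: the Gram matrix $B=B(A',\beta')$ has $b_{11}=n\in R$ (the image of the integer $n$) and satisfies $b_{ii}\equiv b_{1i}^2 \pmod{2}$, i.e.\ $B$ is tracelike, now as a matrix over $R$.

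Next I would rerun the symmetrizing and expansion steps verbatim over $R$. Lemma~\ref{reduce-to-symmetric-even-diagonal}, Proposition~\ref{determinant-derivative}, Lemma~\ref{lem:uut}, and Proposition~\ref{prop:detC4} are all stated as identities or congruences among integer matrices, but their proofs are purely formal manipulations—row operations, the multilinear expansion of the determinant, and the adjugate identities \eqref{eqn:adjA}—and every coefficient that appears (signs, the factor $n$, the binomial $2$) lives in the prime ring $\Z\to R$. So each carries over to $\M_n(R)$ with the congruences now taken modulo $4R$. In particular Proposition~\ref{prop:detC4} yields \[ \det(C)\equiv \pf(U-U^{\matT})^2 \pmod{4} \] where $C$ is the even-diagonal symmetrization of $B$ produced by Lemma~\ref{reduce-to-symmetric-even-diagonal}, and $U$ is its upper-triangular part with halved diagonal. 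Chaining the congruences gives \[ \disc(A',\beta')=\det(B)\equiv \det(C)\equiv \pf(U-U^{\matT})^2 \pmod 4, \] and since $\disc(A',\beta')=\disc(A,\beta)$ in $R/R^{\times2}$ by the splitting lemma, the left-hand side is the quantity we want. I would then \emph{define} $\discpf(A',\beta')\colonequals \pf(U-U^{\matT})\in R$, matching the definition in \S\ref{sec:discpf} so that the named invariant is precisely this pfaffian, which closes the statement.

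The subtle points I would watch are the following. The pfaffian of a skew-symmetric matrix is defined by a universal integer-coefficient polynomial in the entries (Stembridge \cite[Proposition 2.2]{stembridge:pfaffians}), so $\pf(U-U^{\matT})\in R$ makes sense for any commutative $R$, and the identity $\det=\pf^2$ holds over $R$ because it is a polynomial identity with $\Z$-coefficients; I would cite this rather than reprove it. The second proof of Proposition~\ref{determinant-derivative}, which argued by density in $\M_n(\R)$, does \emph{not} generalize—topological density is unavailable over an arbitrary $R$—so I must be sure to route through the first, combinatorial proof, which expands $\det(M+2Q)$ directly and uses only that $4=2\cdot2$ annihilates the cross terms; that proof is insensitive to the ground ring. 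Finally, in Lemma~\ref{reduce-to-symmetric-even-diagonal} the reduction used $1-n\equiv1\pmod4$, valid because $4\mid n$ and $n$ is interpreted as its image in $R$ from $\Z$; this is why the rank-padding to a multiple of $4$ is done before symmetrizing.

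The main obstacle is bookkeeping rather than mathematical depth: making sure every congruence is correctly interpreted as holding modulo $4R$ while the final discriminant equality is only well-defined modulo $R^{\times2}$, and confirming that passing to $R/R^{\times2}$ does not disturb the mod-$4$ comparison. Concretely, $\disc(A,\beta)$ is only determined up to a unit square $u^2$, and I must check that multiplying a representative by $u^2$ changes $\pf(U-U^{\matT})^2$ compatibly—equivalently, that the congruence $\disc\equiv\discpf^2\pmod4$ is stable under the $R^{\times2}$-ambiguity on both sides. Since a change of basis by $Q\in\GL_n(R)$ scales the Gram matrix by $\det(Q)^2$ and scales the pfaffian by $\det(Q)$ (the pfaffian transforms as $\pf(Q^{\matT}SQ)=\det(Q)\pf(S)$ for skew-symmetric $S$), both sides scale by the same unit square, so the congruence is well-posed in $R/R^{\times2}$; verifying this compatibility carefully is the one genuinely new check beyond transcribing the $\Z$-proof.
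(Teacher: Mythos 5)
Your reduction scaffolding is sound and matches the paper's intent: pass to the unitally framed $A'=R\times A$ via the generalized splitting lemma, observe that its Gram matrix is tracelike over $R$, and note that Lemma~\ref{reduce-to-symmetric-even-diagonal}, the first (combinatorial) proof of Proposition~\ref{determinant-derivative}, Lemma~\ref{lem:uut}, and Proposition~\ref{prop:detC4} are universal polynomial identities and congruences that hold verbatim over any commutative ring; you are also right that the density argument in the second proof of Proposition~\ref{determinant-derivative} must be avoided. The genuine gap is at the last step: you \emph{define} $\discpf(A',\beta')$ to be $\pf(U-U^{\matT})$ and assert that this ``matches the definition in \S\ref{sec:discpf}.'' It does not. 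Definition~\ref{def-dpf} is a combinatorial sum over subsets $J\subseteq\{2,\dots,n\}$ and perfect matchings, and it is not equal to $\pf(U-U^{\matT})$: already for a tracelike matrix of size $4$ one has $\pf(U-U^{\matT})=b_{12}b_{34}-b_{13}b_{24}+b_{14}b_{23}-b_{12}b_{13}b_{14}$, whereas $\dpf(B)=b_{12}b_{13}b_{14}+b_{23}b_{14}+b_{24}b_{13}+b_{34}b_{12}$; the two agree only modulo $2$. That mod-$2$ identification (which is what makes their squares agree mod $4$) is precisely the substantive content of the proof of Theorem~\ref{thm-dpf}: one writes $U-U^{\matT}\equiv B'+B'' \pmod{2}$ and applies Stembridge's decomposition lemma for pfaffians of sums, then evaluates the two families of sub-pfaffians. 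By replacing this step with a redefinition, you have shown that $\disc(A,\beta)$ is congruent mod $4$ to the square of \emph{some} element of $R$, but not the stated theorem, which names a specific combinatorial invariant.

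There is a second, related mismatch: you pad the rank up to a multiple of $4$ before symmetrizing, so your $U$ comes from a Gram matrix whose size is divisible by $4$, whereas $\discpf(A',\beta')$ in the statement is attached to $A'=R\times A$ of rank $n+1$, which is arbitrary. You never relate the pfaffian of the padded matrix back to an invariant of $A'$ itself. The paper bridges this by proving the tracelike congruence $\det(B)\equiv\dpf(B)^2\pmod{4}$ for \emph{all} sizes: first when $4\mid n$ (where your argument lives), then by a reverse induction that borders a tracelike $B_{n-1}$ by a $1\times 1$ block, performs row and column operations to produce a tracelike $B_n'$, and checks that $\dpf(B_n')$ specializes to $\dpf(B_{n-1})$. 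You need either this descent step or an explicit comparison of $\dpf$ under your padding; without one of them your chain of congruences terminates at the wrong invariant. (A minor point: your closing worry about the $R^{\times 2}$-ambiguity is unnecessary --- once $\beta$ and $\beta'$ are fixed, both $\disc(A,\beta)$ and $\discpf(A',\beta')$ are honest elements of $R$, and the splitting lemma's equality $\det B(A',\beta')=\det B(A,\beta)$ is exact in $R$, not just up to unit squares, so the congruence is a statement in $R/4R$ with no further well-posedness check required.)
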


\begin{remark}
We can also go a bit farther, arguing \emph{locally}.  An $R$-module $M$ is said to have some property \defi{(Zariski-)locally} if there exist $r_1,\dots,r_m\in R$ generating the unit ideal $R$ such that the localization $M[r_i^{-1}]$ has that property as an $R[r_i^{-1}]$-module for all $i=1,\dots,m$.  
In particular, we can speak of an $R$-module $M$ being \defi{locally free of rank $n$}.
The same arguments then show that if $R$ is a commutative ring and $A$ is an $R$-algebra that is locally free of rank $n$ as an $R$-module, then $\disc(A)$ is locally a square modulo $4$.  
\end{remark}


%
%
%
%
%
%

\section{Discriminant pfaffian} \label{sec:discpf}

In this section, we refine the result of the previous section by giving an explicit, combinatorial expression for our ``square root modulo 4'' obtained from pfaffians.

To begin, recall that a \defi{perfect matching} $P$ on a set $J$ is a partition of $J$ into subsets of cardinality $2$.

\begin{defin}\label{def-dpf}
Let $B=(b_{ij})_{i,j} \in \M_n(\Z)$ be tracelike.  Define the \defi{discriminant pfaffian} of $B$ by
\[\dpf(B) \colonequals \sum_{\substack{J\subseteq \{2,\dots,n\}\\ \#J\text{ even}}} \sum_{\substack{\text{perfect} \\ \text{matchings}\\P\text{ on }J}}\Biggl(\prod_{\{i,j\}\in P} b_{ij}\Biggr)\Biggl(\prod_{k\in\{2,\dots,n\}\smallsetminus J} b_{1k}\Biggr).\]
(If $P=\emptyset$, by convention the empty product is defined to be $1$.)

If $(A,\beta)$ is a unitally framed ring of rank $n$, define its \defi{discriminant pfaffian} by
\[ \dpf(A,\beta) = \dpf(B(A,\beta)). \]
\end{defin}



The value of $\dpf(B)$ for the first few values of $n$ are as follows:
\[\begin{array}{c|c}
n & \dpf(B)  \\ \hline
1 & 1\\
2 & b_{12}\\
3 & b_{12}b_{13} + b_{23}\\
4 & b_{12}b_{13}b_{14} + b_{23}b_{14} + b_{24}b_{13} + b_{34}b_{12}\\
\end{array}\]
In general the number of terms in $\discpf(A, \beta)$ is given by the number of involutions on a set of $n-1$ letters \cite[Sequence A000085]{OEIS}.

\begin{theorem}\label{thm-dpf}
 Let $B \in \M_n(\Z)$ be tracelike.  Then $\det(B) \equiv \dpf(B)^2 \pmod{4}$.  
\end{theorem}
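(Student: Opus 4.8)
The plan is to build on the congruence chain already assembled for the case $4 \mid n$ and to reduce the general case to it by a size-increasing padding, the one genuinely new ingredient being a combinatorial identification of a pfaffian with $\dpf(B)$ modulo $2$. Throughout I will use that $x \equiv y \pmod 2$ forces $x^2 \equiv y^2 \pmod 4$, so that it always suffices to match square roots modulo $2$.

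\emph{Reduction to $4 \mid n$.} First I would introduce a padding enlarging a tracelike matrix by one index while preserving both $\det$ and $\dpf$. Given tracelike $B=(b_{ij}) \in \M_n(\Z)$, let $B' \in \M_{n+1}(\Z)$ be obtained by replacing the $(1,1)$-entry $n$ with $n+1$ and adjoining a new last index with $b'_{1,n+1}=b'_{n+1,1}=b'_{n+1,n+1}=1$ and $b'_{i,n+1}=0$ for $2 \le i \le n$. (This is the matrix shadow of the $A \times \Z$ construction in Lemma \ref{split}, but since $B$ need not be a Gram matrix the verification must be purely matrix-theoretic.) A direct check shows $B'$ is tracelike; cofactor expansion along the last column gives $\det(B')=\det(B)$; and splitting the defining sum of $\dpf(B')$ by whether $n+1 \in J$ shows that terms with $n+1 \in J$ vanish, since the partner $k \in \{2,\dots,n\}$ of $n+1$ in any matching contributes the factor $b'_{k,n+1}=0$, while terms with $n+1 \notin J$ reproduce $\dpf(B)$ with the harmless factor $b'_{1,n+1}=1$. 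Hence $\dpf(B')=\dpf(B)$, and iterating lets me assume $4 \mid n$.

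\emph{The case $4 \mid n$ and the combinatorial identity.} Here Lemma \ref{reduce-to-symmetric-even-diagonal} produces a symmetric $C$ with even diagonal and $\det(B) \equiv \det(C) \pmod 4$, and Proposition \ref{prop:detC4} gives $\det(C) \equiv \pf(U-U^{\matT})^2 \pmod 4$ with $C=U+U^{\matT}$. Writing $S \colonequals U-U^{\matT}$, its above-diagonal entries are $S_{1j}=b_{1j}$ and $S_{ij}=b_{ij}-b_{1i}b_{1j}$ for $2 \le i<j$ (the even diagonal of $C$ never enters $\pf$), so it suffices to prove $\pf(S) \equiv \dpf(B) \pmod 2$. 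Working modulo $2$ I would drop all signs and expand $\pf(S) \equiv \sum_M \prod_{\{i,j\}\in M} S_{ij}$ over perfect matchings $M$ of $\{1,\dots,n\}$: in each $M$ the element $1$ is matched to some $\ell$, contributing $b_{1\ell}$, while every other edge contributes $S_{ij} \equiv b_{ij}+b_{1i}b_{1j}$, which upon expansion either ``keeps'' the edge as $b_{ij}$ or ``breaks'' it into $b_{1i}b_{1j}$. Each resulting monomial is exactly a $\dpf(B)$-term: the kept edges form a perfect matching $P$ on an even set $J \subseteq \{2,\dots,n\}$, and the complementary indices $T=\{2,\dots,n\}\smallsetminus J$ each supply a factor $b_{1k}$.

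\emph{The crux, and the main obstacle.} The heart of the argument is the multiplicity count: a fixed pair $(J,P)$ arises precisely by choosing which $\ell \in T$ is matched to $1$ together with a perfect matching of $T \smallsetminus \{\ell\}$ (the broken edges), so its multiplicity is $\sum_{\ell \in T}(|T|-2)!! = |T|\,(|T|-2)!! = |T|!!$. Because $4 \mid n$ makes $n$ even while $|J|$ is even, $|T|=n-1-|J|$ is odd, so $|T|!!$ is a product of odd numbers and hence odd; thus every $\dpf(B)$-term occurs an odd number of times, giving $\pf(S) \equiv \dpf(B) \pmod 2$ and therefore $\det(B) \equiv \pf(S)^2 \equiv \dpf(B)^2 \pmod 4$. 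I expect this multiplicity computation to be the only real obstacle: one must verify both that the signless pfaffian expansion reorganizes exactly into the $(J,P)$-terms of $\dpf(B)$ and that the double-factorial count is odd, the latter relying essentially on the parity forced by $4 \mid n$. The remaining pieces — invariance of tracelikeness, $\det$, and $\dpf$ under padding, and the entries of $S$ — are routine direct verifications.
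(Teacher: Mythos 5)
Your proposal is correct, and its overall architecture coincides with the paper's: the same padding construction (the paper phrases it as reverse induction, adjoining a $1\times 1$ block and adding the last row and column to the first, which produces exactly your padded matrix) reduces to the case $4 \mid n$, and then Lemma~\ref{reduce-to-symmetric-even-diagonal} and Proposition~\ref{prop:detC4} reduce everything to the single congruence $\pf(U-U^{\matT}) \equiv \dpf(B) \pmod 2$. Where you genuinely diverge is in how that congruence is established. The paper splits $U-U^{\matT} \equiv B' + B'' \pmod 2$, where $B'$ carries the entries $b_{ij}$ with $i,j \geq 2$ and $B''$ carries $b_{1j}$ together with the products $b_{1i}b_{1j}$, and then invokes Stembridge's lemma on pfaffians of sums, $\pf(B'+B'') \equiv \sum_J \pf(B'_J)\,\pf(B''_{J^c}) \pmod 2$, evaluating each factor separately; the evaluation of $\pf(B''_{J^c})$ uses that every perfect matching of $J^c$ contributes the same product $\prod_k b_{1k}$ and that the number of perfect matchings of an even-cardinality set is odd. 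You instead expand $\pf(U-U^{\matT})$ directly as a signless sum over perfect matchings of $\{1,\dots,n\}$, expand each entry $b_{ij}+b_{1i}b_{1j}$ into ``kept'' and ``broken'' contributions, and count multiplicities: your count $|T|\,(|T|-2)!! = |T|!!$ for the pair $(J,P)$ is precisely the number of perfect matchings of $J^c = \{1\} \cup T$, so your computation is an inlined, self-contained proof of exactly the special case of Stembridge's lemma that the paper cites (both arguments ultimately rest on the same parity fact about matchings). What your route buys is independence from the external reference, at the cost of carrying out the bookkeeping by hand; what the paper's route buys is a shorter argument that isolates the combinatorics into a quotable general lemma. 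All the supporting verifications in your write-up (tracelikeness of the padded matrix, preservation of $\det$ and $\dpf$, the entries of $U-U^{\matT}$, and the oddness of $|T|!!$ given $n$ even and $|J|$ even) check out.
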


\begin{exm}
For example, if $n=2$ we have
\[ B = \begin{pmatrix} 2 & b_{12} \\ b_{12} & b_{12}^2 + 2c_2 \end{pmatrix} \]
for some $c_2 \in \Z$, so 
\[ \det B = 2b_{12}^2+4c_2 - b_{12}^2 \equiv b_{12}^2 = \dpf(B)^2 \pmod{4}. \]
\end{exm}

Before proceeding with the proof, we motivate the discriminant pfaffian using the modern proof of Stickelberger's theorem.

\begin{rmk} \label{rmk:PNPN}
  Let $K$ be a number field with ring of integers $\Z_K$ and integral
  basis $\alpha_1,\dots,\alpha_n$.  Letting
  $\sigma_1,\dots,\sigma_n \colon K \hookrightarrow \C$ be the
  distinct embeddings of $K$ into $\C$, we consider the $n \times n$
  matrix of complex numbers $E \colonequals (\sigma_i(\alpha_j))_{i,j}$.  Then
  $B = E^\matT E$ (see Marcus \cite[Theorem 6]{Marcus:nf}) and so $\disc \Z_K = \det(B) = \det(E)^2$.  
  (The definition in \eqref{eqn:BTralp} has the virtue that it expresses the discriminant as the determinant of a matrix of integers.)
   
  Letting $P$ and $N$ be the sum
  of terms in the expansion of $\det(E)$ involving even and odd
  permutations, respectively, the standard proof of Stickelberger's
  discriminant theorem is to write
  \begin{equation}
   \disc \Z_K = \det(\sigma_i(\alpha_j))_{i,j}^2 = (P-N)^2 =
    (P+N)^2-4PN; 
    \end{equation}
    by construction, the elements $P+N,PN$ are algebraic integers, and
    by Galois theory they belong to $\Q$, hence $P+N,PN \in \Z$
    and the result follows.  With this in mind, a natural square
  root of the discriminant modulo 4 is $P+N$, which is equal to the
  \emph{permanent} of the matrix $E$.    
  This permanent agrees with the 
  discriminant pfaffian modulo $2$ by Theorem \ref{thm-dpf}.
\end{rmk}

\begin{proof}[{Proof of Theorem~\textup{\ref{thm-dpf}}}]
 We first consider the case that $4 \mid n$.  Combining Lemma \ref{reduce-to-symmetric-even-diagonal} and Proposition \ref{prop:detC4}, we have
 \begin{equation} 
 \det(B) \equiv \pf(U-U^{\matT})^2 \pmod{4} 
 \end{equation}
 where
\begin{equation} 
U - U^\matT = \begin{pmatrix}
  0 & b_{12} & b_{13} & \dots & b_{1n}\\
  -b_{12} & 0 & b_{23} - b_{12}b_{13} & \dots & b_{2n} - b_{12} b_{1n}\\
  -b_{13} & -(b_{23} - b_{12}b_{13}) & 0 & \dots & b_{3n} - b_{13} b_{1n}\\
  \vdots & \vdots & \vdots & \ddots & \vdots\\
  -b_{1n} & -(b_{2n} - b_{12}b_{2n}) & -(b_{3n} - b_{13}b_{1n}) & \dots & 0
 \end{pmatrix}.
\end{equation}
Since $x \equiv y \pmod{2}$ implies $x^2 \equiv y^2 \pmod{4}$ for all $x,y \in \Z$, it suffices to show that
\begin{equation} \label{eqn:BUUT}
\dpf(B) \equiv \pf(U-U^{\matT}) \equiv \pf(U+U^{\matT}) \pmod{2}. 
\end{equation}
In particular, we can ignore signs throughout.

To prove \eqref{eqn:BUUT}, we write $U-U^\matT \equiv B'+B'' \pmod{2}$ where
\begin{align*} 
B' &\colonequals \begin{pmatrix}
  0 & 0 & 0 & \dots & 0\\
  0 & 0 & b_{23} & \dots & b_{2n}\\
  0 & b_{23} & 0 & \dots & b_{3n} \\
  \vdots & \vdots & \vdots & \ddots & \vdots\\
  0 & b_{2n}  & b_{3n}  & \dots & 0
 \end{pmatrix} \\
 B'' &\colonequals 
 \begin{pmatrix}
  0 & b_{12} & b_{13} & \dots & b_{1n}\\
  b_{12} & 0 &  b_{12}b_{13} & \dots & b_{12} b_{1n}\\
  b_{13} & b_{12}b_{13} & 0 & \dots & b_{13} b_{1n}\\
  \vdots & \vdots & \vdots & \ddots & \vdots\\
  b_{1n} &  b_{12}b_{2n} & b_{13}b_{1n} & \dots & 0
 \end{pmatrix}. 
\end{align*}

By Stembridge \cite[Lemma 4.2(a)]{stembridge:pfaffians},
\[
\pf(B' + B'') \equiv \sum_{\substack{J\subseteq\{1,\dots,n\}\\\#J\text{ even}}} \pf(B'_J) \pf(B''_{J^c}) \pmod{2}
\]
where $B'_J$ is the submatrix of $B'$ obtained by keeping only the entries in rows and columns indexed by elements of $J$, and $B''_{J^c}$ is similarly the matrix of entries in $B''$ whose row and column indices are \emph{not} in $J$.

To evaluate $\pf(B'_J)$ modulo $2$, note first that if $1\in J$, then $B'_J$ contains a row of zeros, so its determinant (and therefore its pfaffian) vanishes. Otherwise, the $ij$th entry of $B'$ is just $b_{ij}$ for $i > j$, so by the usual pfaffian formula in terms of perfect matchings (see the definition in \cite[p. 102]{stembridge:pfaffians}), we have
\begin{equation}
\pf(B'_J) \equiv \sum_{P\text{ on }J}\prod_{\{i,j\}\in P} b_{ij} \pmod{2}, 
\end{equation}
the sum over perfect matchings $P$ on $J$.  Meanwhile, given a subset $J\subseteq\{2,\dots,n\}$, each perfect matching on $J^c$ contributes the same product to $\pf(B''_{J^c})$, namely $\prod_{k\in \{2, \dots, n\}\setminus J} b_{1k}$. Since there is an odd number of perfect matchings on any even-cardinality set, modulo $2$ we have $\pf(B'_{J^c}) \equiv \prod_{k\in \{2, \dots, n\}\setminus J} b_{1k} \pmod{2}$. Now we just put this altogether:
\begin{equation}
\begin{aligned}
 \pf(U-U^\matT) &\equiv \pf(B' + B'') \equiv \sum_{\substack{J\subseteq\{1,\dots,n\}\\\#J\text{ even}}} \pf(B'_J) \pf(B''_{J^c})\\
 &\equiv \sum_{\substack{J\subseteq\{2,\dots,n\}\\\#J\text{ even}}} \sum_{\substack{P\text{ on }J}}\Biggl(\prod_{\{i,j\}\in P} b_{ij}\Biggr) \Biggl(\prod_{k\in \{2, \dots, n\}\smallsetminus J} b_{1k}\Biggr)
 \\
 &\equiv \dpf(A,\beta) \pmod{2}.
\end{aligned}
\end{equation}

Having proven it for all $n$ such that $4 \mid n$, we finish by a reverse induction, showing that if statement holds for $n \in \Z_{\geq 2}$ then it holds for $n-1$.  Since every positive integer $n$ is less than or equal to a multiple of $4$, this will prove the theorem.  
Let $B_{n-1} \in \M_{n-1}(\Z)$ be tracelike.  Let $B_n=\begin{pmatrix} B_{n-1} & 0 \\ 0 & 1 \end{pmatrix} \in \M_{n}(\Z)$ be the block matrix formed from $B_{n-1}$ and $\begin{pmatrix} 1 \end{pmatrix}$.  Then $\det(B_n)=\det(B_{n-1})$.  If we add the last row to the first row, then add the last column to the first column, we obtain 
\begin{equation} 
B_n' \colonequals \begin{pmatrix} B_{n-1}' & v^{\matT} \\ v & 1 \end{pmatrix}
\end{equation}
where $v=(1,0,\dots,0)$ and $B_{n-1}'$ is the matrix obtained from $B_{n-1}$ by adding $1$ to $b_{11}$.  Now $B_n'$ is tracelike!  By the inductive hypothesis, we have
\begin{equation} 
\det(B_{n-1})=\det(B_n')\equiv \dpf(B_n')^2 \pmod{4}. 
\end{equation}
Now the discriminant pfaffian $\dpf(B_n')$ is obtained from substituting $b_{1n}=1$ and $b_{in}=0$ for $i=2,\dots,n-1$.  To evaluate, we look at Definition \ref{def-dpf}.  For every term with $n \in J$, any perfect matching $P$ on $J$ has $\{i,n\}$ in $P$ with $i \in \{2,\dots,n-1\}$ and therefore such a term vanishes.  On the other hand, every term with $n \not \in J$ corresponds to $J \subseteq \{2,\dots,n-1\}$ with final term $\prod_{k \in \{2,\dots,n-1\} \smallsetminus J} b_{1k}$ since $b_{1n}=1$.  
\end{proof}

\nocite{}
\bibliographystyle{amsalpha}
\bibliography{stickelberger}

\providecommand{\bysame}{\leavevmode\hbox to3em{\hrulefill}\thinspace}
\providecommand{\MR}{\relax\ifhmode\unskip\space\fi MR }
\providecommand{\MRhref}[2]{%
  \href{http://www.ams.org/mathscinet-getitem?mr=#1}{#2}
}
\providecommand{\href}[2]{#2}

\end{document}